\newcommand{\dateline}[3]{}
\newcommand{\MSC}[2][2020]{\subjclass[#1]{#2}}
\newcommand{\Copyright}[1]{}
\newcommand{\arxiv}[1]{\href{https://arxiv.org/abs/#1}{\texttt{arXiv:#1}}}
\newtheorem{definition}{Definition}
\newtheorem{theorem}[definition]{Theorem}
\newtheorem{lemma}[definition]{Lemma}
\newtheorem{remark}[definition]{Remark}
\newcommand{\Title}[2][]{\title[#1]{#2}}
\newcommand{\ejcbrk}{}
\newcommand{\A}{\ensuremath{\mathcal{A}}}
\newcommand{\C}{\ensuremath{\mathbb{C}}}
\newcommand{\D}{\ensuremath{\mathcal{D}}}
\newcommand{\F}{\ensuremath{\mathbb{F}}}
\newcommand{\G}{\ensuremath{\Gamma}}
\newcommand{\I}{\ensuremath{\mathcal{I}}}
\newcommand{\J}{\ensuremath{\mathcal{J}}}
\newcommand{\Q}{\ensuremath{\mathbb{Q}}}
\newcommand{\R}{\ensuremath{\mathcal{R}}}
\newcommand{\RR}{\ensuremath{\mathbb{R}}}
\newcommand{\Ss}{\ensuremath{\mathcal{S}}}
\newcommand{\M}{\ensuremath{\mathcal{M}}}
\newcommand{\Z}{\ensuremath{\mathbb{Z}}}
\newcommand{\Id}{\ensuremath{\operatorname{Id}}}
\newcommand{\Cay}{\ensuremath{\operatorname{Cay}}}
\newcommand{\Cyc}{\ensuremath{\operatorname{Cyc}}}
\newcommand{\Had}{\ensuremath{\operatorname{Had}}}
\newcommand{\Pair}{\ensuremath{\operatorname{Pair}}}
\newcommand{\GQ}{\ensuremath{\operatorname{GQ}}}
\newcommand{\GH}{\ensuremath{\operatorname{GH}}}
\newcommand{\PG}{\ensuremath{\operatorname{PG}}}
\newcommand{\SRG}{\ensuremath{\operatorname{SRG}}}
\newcommand{\Conf}{\ensuremath{\operatorname{Conf}}}
\newcommand*{\ind}{\text{\usefont{U}{bbold}{m}{n}1}}
\newcommand{\noop}[1]{}
\algrenewcommand{\algorithmicrequire}{\textbf{Input:}}
\begin{document}
\dateline{Jan 1, 2024}{Jan 2, 2024}{TBD}

\MSC{05E30}


\Copyright{The author. Released under the CC BY license (International 4.0).}

%

\Title[Eigenspace embeddings of imprimitive association schemes]{Eigenspace embeddings\\ of imprimitive association schemes}



\newcommand{\authorname}{Jano\v{s} Vidali}
\newcommand{\authorabbrv}{J.\ Vidali}
\newcommand{\authortextone}{Faculty of Mathematics and Physics, University of Ljubljana, Slovenia}
\newcommand{\authoremail}{janos.vidali@fmf.uni-lj.si}
\newcommand{\authortexttwo}{Institute of Mathematics, Physics and Mechanics, Ljubljana, Slovenia}





\author[\authorabbrv]{\authorname}
\address{\authorname,\newline
	\authortextone.
	\newline also affiliated with: \newline
	\authortexttwo.
}
\email{\authoremail}


\begin{abstract}
For a given symmetric association scheme $\A$ and its eigenspace $S_j$
there exists a mapping of vertices of $\A$ to unit vectors of $S_j$,
known as the spherical representation of $\A$ in $S_j$,
such that the inner products of these vectors
only depend on the relation between the corresponding vertices;
furthermore, these inner products only depend on the parameters of $\A$.
We consider parameters of imprimitive association schemes
listed as open cases in the list of parameters for quotient-polynomial graphs
recently published by Herman and Maleki,
and study embeddings of their substructures into some eigenspaces
consistent with spherical representations of the putative association schemes.
Using this,
we obtain nonexistence for two parameter sets for $4$-class association schemes
and one parameter sets for a $5$-class association scheme
passing all previously known feasibility conditions,
as well as uniqueness for two parameter sets for $5$-class association schemes.
\end{abstract}

\maketitle

\section{Introduction}

Association schemes were first introduced
within the theory of experimental design,
however, since Delsarte~\cite{d73a},
they have been primarily studied as combinatorial objects of their own,
representing the basic underlying structures in various fields
such as coding theory, design theory, and finite geometry.
Much of the research on association schemes
has been focused on some special cases,
such as strongly regular graphs (i.e., $2$-class association schemes),
distance-regular graphs (corresponding to $P$-polynomial association schemes)
and $Q$-polynomial association schemes.
Nevertheless, even for these subfamilies,
a complete classification is still a widely open problem.
Tables of feasible parameters for various families of association schemes
have been compiled,
in particular,
by Brouwer et al.~\cite{b11,b13,bcn89}
for strongly regular and distance-regular graphs,
by Van Dam~\cite{vd99} for three-class association schemes,
and by Williford~\cite{gvw21,w17} for $Q$-polynomial association schemes.
Recently,
two new surveys of feasible parameter sets of association schemes
have been compiled by Herman and Maleki:
one for association schemes with noncyclotomic eigenvalues~\cite{hm23}
and one for quotient-polynomial graphs~\cite{hm23a,hm24}.

Contributions to the classification of association schemes
come in the form of new constructions
and characterizations of association schemes
with a certain parameter set or belonging to a family of parameter sets
-- in particular,
it may be possible to prove that there is a unique association scheme
with a given parameter set (uniqueness proof),
or that there are none (nonexistence proof).
Many families and sporadic examples of association schemes are known,
and constructing new ones,
particularly in the more studied subfamilies,
has proved to be increasingly difficult.
On the other hand,
there are many parameter sets which pass the known feasibility conditions,
but no corresponding association scheme has been constructed;
there are also many cases
when one or more corresponding association schemes are known,
but it is not known whether there are any more.

One of the techniques that can be used to study association schemes
is to study their spherical representations in their eigenspaces.
Bannai, Bannai and Bannai~\cite{bbb08} have used this technique
to prove uniqueness of two association schemes arising from spherical codes;
more recently,
Gavrilyuk, Suda et al.~have used a similar technique
to prove uniqueness of an association scheme
related to the Witt design on $11$ points~\cite{gs23},
and of a non-symmetric commutative association scheme
arising from a set of equiangular lines in $\C^8$~\cite{glms25}.
In the present paper,
we apply such a technique to study imprimitive association schemes
with parameters which are listed as open cases
in the aforementioned list of parameter sets of association schemes
corresponding to quotient-polynomial graphs.
We first apply the known feasibility conditions
and find numerous cases when they either rule out a parameter set,
or there is a known example (see Appendix~\ref{app:known}).
Then,
using software~\cite{v19,v25}
developed on top of the SageMath computer algebra system~\cite{sage24},
we conduct some computer searches
and conclude nonexistence for three of the cases
that satisfy the known feasibility conditions,
and uniqueness for two more cases.

\section{Preliminaries}

In this section we review some basic definitions and concepts.
See Brouwer, Cohen and Neumaier~\cite{bcn89} for further details.

Let $X$ be a set of $n$ {\em vertices},
and $R \subseteq X^2$ a binary relation on $X$.
The matrix $A \in \{0, 1\}^{X \times X}$
such that $A_{xy} = 1$ if and only if $(x, y) \in R$
is called the {\em adjacency matrix} of the relation $R$.
If $R$ is an irreflexive relation,
then the pair $\G = (X, R)$ is called a (simple, directed) {\em graph}
-- such a graph has the set $X$ as its vertex set
and the set $R$ as its arc set (i.e., the set of its directed edges),
and $A$ is its adjacency matrix.
In the case when $R$ is a symmetric relation,
we will understand the graph $\G$ to be undirected,
and its edges are precisely the unordered pairs $\{x, y\}$
such that $(x, y) \in R$.
For a subset $Y \subseteq X$,
we define the {\em induced subgraph} of $\G$ on $Y$ as $\G|_Y = (Y, R|_Y)$,
where $R|_Y = \{(x, y) \in R \mid x, y \in Y\}$
is the restriction of the relation $R$ onto the subset $Y$.

Let $\R = \{R_i \mid i \in \I\}$,
where $\I$ is an index set of size $d+1$ for some $d$,
be a partition of $X^2$ such that
$\Id_X := \{(x, x) \mid x \in X\} \in \R$ and $\emptyset \not\in \R$
-- i.e., $\R$ is a set of binary relations on $X$
containing the identity relation
such that each pair of vertices of $X$ lies in precisely one relation of $\R$.
A {\em relation scheme} is defined by a pair $\A = (X, \R)$.
A non-identity relation of $\R$ is also called a {\em class},
so we may refer to $\A$ as a {\em $d$-class relation scheme}.
Customarily,
we will have $\I = \{0, 1, \dots, d\}$ and $R_0 = \Id_X$,
although we may occasionaly deviate from this convention.
A relation scheme may be concisely represented by its {\em relation matrix}
$M \in \I^{X \times X}$ satisfying $(x, y) \in R_{M_{xy}}$ ($x, y \in X$).

An {\em isomorphism} between relation schemes
$\A = (X, \R)$ and $\A' = (X', \R')$
is a pair $(\phi, \psi)$ of bijective maps
$\phi: X \to X'$ and $\psi: \R \to \R'$
such that for each pair of vertices $x, y \in X$
and for each relation $R \in \R$
we have $(x, y) \in R$ if and only if $(x^\phi, y^\phi) \in R^\psi$.
The relation schemes $\A$ and $\A'$ are {\em isomorphic}
if there exists such an isomorphism.
An {\em automorphism} of $\A$ is an isomorphism between $\A$ and itself.

If all the relations of $\R$ are symmetric,
then $\A$ is called a {\em symmetric relation scheme}.
For a subset $Y \subseteq X$,
we define the {\em induced subscheme} of $\A$ on $Y$ as $\A|_Y = (Y, \R|_Y)$,
where
$\R|_Y = \{R|_Y \mid R \in \R\} \setminus \{\emptyset\}$
is the restriction of the partition $\R$ onto the subset $Y$.
Note that $(\Id_X)|_Y = \Id_Y \in \R|_Y$, so $\A|_Y$ is also a relation scheme.
Clearly, if $\A$ is symmetric, $\A|_Y$ is symmetric as well.

Suppose that $\A = (X, \R)$ is a symmetric relation scheme
with the additional property
that there exist numbers $p^h_{ij}$ ($h, i, j \in \I$)
such that for each pair $(x, y) \in R_h$,
there are precisely $p^h_{ij}$ vertices $z \in X$
with $(x, z) \in R_i$ and $(z, y) \in R_j$.
Then $\A$ is called a (symmetric) {\em association scheme},
and the numbers $p^h_{ij}$ ($h, i, j \in \I$)
are its {\em intersection numbers}.
The number $k_i := p^0_{ii}$ is the {\em valency} of the relation $R_i$
($i \in \I$)
-- i.e., for each vertex $x \in X$,
there exist precisely $k_i$ vertices $y \in X$ such that $(x, y) \in R_i$.
From now on, we will assume that $\A$ is an association scheme.

Let $A_i$ be the adjacency matrix of the relation $R_i$ ($i \in \I$)
-- then we say that $A_i$ ($i \in \I$) are the adjacency matrices
of the association scheme $\A$.
We also define the corresponding graphs $\G_i = (X, R_i)$
($i \in \I$, $R_i \ne \Id_X$).
Note that $A_i A_j = \sum_{h \in \I} p^h_{ij} A_h$ ($i, j \in \I$) holds.
In particular,
since the adjacency matrices of a symmetric association scheme are symmetric,
they can be simultaneously diagonalized,
giving a decomposition of $\RR^X$
as a direct sum of $d+1$ common eigenspaces
forming a set $\Ss = \{S_j \mid j \in \J\}$,
where $\J$ is an index set of size $d+1$.
Note that the all-ones matrix $J = \sum_{i \in \I} A_i$
has an eigenvalue $n$ with multiplicity $1$
and its corresponding eigenspace is $\langle \ind_X \rangle$,
i.e., the one-dimensional subspace of $\RR^X$ spanned by the all-ones vector;
this subspace is also an eigenspace of $A_i$ ($i \in \I$)
for the eigenvalue $k_i$.
Therefore, $\langle \ind_X \rangle \in \Ss$.
Customarily,
we will have $\J = \{0, 1, \dots, d\}$ and $S_0 = \langle \ind_X \rangle$,
although we may, again, occasionaly deviate from this convention
(in particular,
it may happen that $\I$ and $\J$ are different sets of the same cardinality).

Let $E_j \in \RR^{X \times X}$ ($j \in \J$)
be the projector matrix onto the eigenspace $S_j$
-- these matrices are called the {\em minimal idempotents} of $\A$.
We note that the {\em Bose-Mesner algebra} of $\A$, i.e.,
the algebra generated by the basis of adjacency matrices $\{A_i \mid i \in \I\}$
with respect to ordinary matrix addition and multiplication,
has a second basis $\{E_j \mid j \in \J\}$~\cite[\S 2.2]{bcn89}.
Therefore,
there exist matrices $P \in \RR^{\J \times \I}$ and $Q \in \RR^{\I \times \J}$
(called the {\em eigenmatrix} and the {\em dual eigenmatrix}, respectively)
such that $A_i = \sum_{j \in \J} P_{ji} E_j$ ($i \in \I$)
and $E_j = {1 \over n} \sum_{i \in \I} Q_{ij} A_i$ ($j \in \J$).
We note that for each choice of $i \in \I$,
the value $P_{ji}$ ($j \in \J$)
corresponds to the eigenvalue of $A_i$ on the eigenspace $S_j$
(thus covering all distinct eigenvalues of $A_i$,
possibly with some repetitions).
In particular, we have $P_{0i} = k_i$ ($i \in \I$).
In the case when there exists a bijection $\iota: \I \to \J$,
$\iota: i \to i'$ ($i \in \I$)
such that $P_{i'j} = Q_{ij'}$ holds for all $i, j \in \I$,
we say that the association scheme $\A$ is {\em formally self-dual}.

Since the Bose-Mesner algebra $\M$ is also closed
under the entrywise multiplication of matrices
(denoted by $\circ$, also known as {\em Schur} or {\em Hadamard multiplication}),
it follows that there exist numbers $q^h_{ij}$ $(h, i, j \in \J)$,
known as the {\em Krein parameters},
such that $E_i \circ E_j = {1 \over n} \sum_{h \in \J} q^h_{ij} E_h$.
These numbers are nonnegative (cf.~\cite[Theorem~2.3.2]{bcn89}),
but not necessarily integral or rational,
yet they exhibit properties similar to those
of the intersection numbers of an association scheme
-- there is a formal duality between the two.
We also define the number $m_j := q^0_{jj}$
as the {\em multiplicity} of the eigenspace $S_j$ ($j \in \J$)
-- i.e.,
it corresponds to the dimension of $S_j$
and is therefore a positive integer.
Note that $Q_{0j} = m_j$ ($j \in \J$) also holds.

Any of the parameter sets
$\{p^h_{ij} \mid h,i,j \in \I\}$, $P$, $Q$ and $\{q^h_{ij} \mid h,i,j \in \J\}$
uniquely determines the others,
but not necessarily an association scheme itself
-- for any given parameter set,
there may be one or more association schemes, or none at all.

An {\em imprimitivity set} of the association scheme $\A = (X, \R)$
is a set of relation indices $\tilde{0} \subseteq \I$
such that $R_{\tilde{0}} := \bigcup_{i \in \tilde{0}} R_i$
is an equivalence relation partitioning the vertex set $X$
into the set of equivalence classes
$\tilde{X} := X / R_{\tilde{0}} = \{X_\ell \mid \ell = 1, 2, \dots, \tilde{n}\}$.
We note that
$|X_\ell| = \sum_{i \in \tilde{0}} k_i =: \overline{n}$ ($1 \le \ell \le \tilde{n}$)
and $n = \overline{n} \cdot \tilde{n}$.
Furthermore,
for each equivalence class $X_\ell$ ($1 \le \ell \le \tilde{n}$),
the induced subscheme $\A|_{X_\ell}$ is an association scheme
with intersection numbers
$\overline{p}^h_{ij} = p^h_{ij}$ ($h, i, j \in \tilde{0}$).
The association scheme $\A$ is called {\em imprimitive}
(cf.~\cite[\S 2.4]{bcn89})
if there exists a nontrivial imprimitivity set $\tilde{0}$
(i.e., $\{0\} \subset \tilde{0} \subset \I$, where $R_0 = \Id_X$).

An imprimitivity set $\tilde{0}$ also determines
an equivalence relation $\sim$ on $\I$ defined by
$$
h \sim j \iff \exists i \in \tilde{0}.\ p^h_{ij} \ne 0 \quad (h, j \in \I).
$$
Note that $\tilde{0}$ is an equivalence class of $\sim$,
and we define $\tilde{\imath}$ as the equivalence class of $\sim$
containing $i \in \I$.
This allows us to define the {\em quotient scheme}
$\tilde{\A} = \A / \tilde{0} = (\tilde{X}, \tilde{\R} = \{\tilde{R}_{\tilde{\imath}} \mid \tilde{\imath} \in \tilde{\I}\})$,
where $\tilde{\I} = \I/\!\sim$ and
$$
\tilde{R}_{\tilde{\imath}} = \{(\tilde{x}, \tilde{y}) \in \tilde{X}^2 \mid \exists x \in \tilde{x}, y \in \tilde{y}, i \in \tilde{\imath}. \ (x, y) \in R_i\} \quad (\tilde{\imath} \in \tilde{\I}).
$$

The quotient scheme $\tilde{\A}$ is an association scheme
with intersection numbers
$\tilde{p}^{\tilde{h}}_{\tilde{\imath}\tilde{\jmath}} = {1 \over \overline{n}} \sum_{i \in \tilde{\imath}} \sum_{j \in \tilde{\jmath}} p^h_{ij}$
for all $h \in \tilde{h}$
($\tilde{h}, \tilde{\imath}, \tilde{\jmath} \in \tilde{\I}$).
Thus,
the imprimitivity sets and the parameters
of the resulting subschemes and quotient scheme
only depend on the parameters of the parent association scheme.

Dually,
we may also define a {\em dual imprimitivity set}
as a set of eigenspace indices $\overline{0} \subseteq \J$
such that
$E_{\overline{0}} := {n \over \tilde{n}} \sum_{j \in \overline{0}} E_j$
is the adjacency matrix of an equivalence relation on $X$,
where $\tilde{n}$ equals the number of the resulting equivalence classes.
A dual imprimitivity set is nontrivial if
$\{0\} \subset \overline{0} \subset \J$,
where $S_0 = \langle \ind_X \rangle$.
It turns out that there is a one-to-one relationship
between (nontrivial) imprimitivity sets
and (nontrivial) dual imprimitivity sets,
i.e.,
$E_{\overline{0}}$ is the adjacency matrix of $R_{\tilde{0}}$,
where $\tilde{0}$ is the corresponding imprimitivity set.
In fact,
both imprimitivity sets and dual imprimitivity sets
can be recognized from the parameters of the association scheme.

Similarly as before,
the dual imprimitivity set $\overline{0}$
determines an equivalence relation $\simeq$ on $\J$
defined by
$$
h \simeq i \iff \exists j \in \overline{0}.\ q^h_{ij} \ne 0 \quad (h, i \in \J).
$$
Again we note that $\overline{0}$ is an equivalence class of $\simeq$,
and we define $\overline{\jmath}$ as the equivalence class of $\simeq$
containing $j \in \J$.
We find that the induced subschemes $\A|_{X_\ell}$ ($1 \le \ell \le \tilde{n}$)
have the set of eigenspaces
$\Ss|_{X_\ell} = \{S_{\overline{\jmath}}|_{X_\ell} \mid \overline{\jmath} \in \overline{\J}\}$,
where $\overline{\J} = \J/\!\simeq$ and
$$
S_{\overline{\jmath}}|_{X_\ell} = \left\{v|_{X_\ell} \ \middle| \ v \in \sum_{j \in \overline{\jmath}} S_j\right\} \quad (\overline{\jmath} \in \overline{\J}),
$$
i.e.,
the restriction to $X_\ell$
of the sum of eigenspaces with indices from $\overline{\jmath}$.
The Krein parameters of $\A|_{X_\ell}$ are then
$\overline{q}^{\overline{h}}_{\overline{\imath}\overline{\jmath}} = {1 \over \tilde{n}} \sum_{i \in \overline{\imath}} \sum_{j \in \overline{\jmath}} q^h_{ij}$
for all $h \in \overline{h}$
($\overline{h}, \overline{\imath}, \overline{\jmath} \in \overline{\J}$),
and its eigenmatrix $\overline{P}$ and dual eigenmatrix $\overline{Q}$ satisfy
$\overline{P}_{\overline{\jmath} i} = P_{ji}$ for all $j \in \overline{\jmath}$,
and
$\overline{Q}_{i \overline{\jmath}} = {1 \over \tilde{n}} \sum_{j \in \overline{\jmath}} Q_{ij}$
($i \in \tilde{0}, \overline{\jmath} \in \overline{\J}$).
In particular,
${Q_{ij} \over m_j} = {\overline{Q}_{i \overline{\jmath}} \over \overline{m}_{\overline{\jmath}}}$
holds for all $j \in \overline{\jmath}$,
where
$\overline{m}_{\overline{\jmath}} = \overline{q}^{\overline{0}}_{\overline{\jmath} \overline{\jmath}} = \overline{Q}_{0 \overline{\jmath}}$.
For the quotient scheme $\tilde{\A}$,
we find the set of eigenspaces
$\tilde{\Ss} = \{\tilde{S}_j = \{\tilde{v} \mid v \in S_j\} \mid j \in \overline{0}\}$,
where
$\tilde{v} = \left(\sum_{x \in \tilde{x}} v_x\right)_{\tilde{x} \in \tilde{X}} \in \RR^{\tilde{X}}$.
The Krein parameters of $\tilde{\A}$ are then
$\tilde{q}^h_{ij} = q^h_{ij}$ ($h, i, j \in \overline{0}$),
and its eigenmatrix $\tilde{P}$ and dual eigenmatrix $\tilde{Q}$ satisfy
$\tilde{P}_{j \tilde{\imath}} = {1 \over \overline{n}} \sum_{i \in \tilde{\imath}} P_{ji}$,
and $\tilde{Q}_{\tilde{\imath} j} = Q_{ij}$ for all $i \in \tilde{\imath}$
($\tilde{\imath} \in \tilde{\I}, j \in \overline{0}$).
In particular,
${P_{ji} \over k_i} = {\tilde{P}_{j \tilde{\imath}} \over \tilde{k}_{\tilde{\imath}}}$
holds for all $i \in \tilde{\imath}$,
where
$\tilde{k}_{\tilde{\imath}} = \tilde{p}^{\tilde{0}}_{\tilde{\imath} \tilde{\imath}} = \tilde{P}_{0 \tilde{\imath}}$.
Since the eigenmatrices are square matrices,
we see that $|\tilde{0}| = |\overline{\J}|$ and $|\overline{0}| = |\tilde{\I}|$.

\section{Quotient-polynomial graphs}
\label{sec:qpg}

Quotient-polynomial graphs (QPGs) were introduced by Fiol~\cite{f16},
whose results allow us to state the following definition.

\begin{definition}
Let $\G = (X, R)$ be an undirected graph with adjacency matrix $A$.
The graph $\G$ is {\em quotient-polynomial}
if the algebra generated by the powers of $A$ is the Bose-Mesner algebra
of an association scheme $\A = (X, \R = \{R_i \mid i \in \I\})$.
\end{definition}

Let $\G$ be a quotient-polynomial graph
and $\A$ the corresponding association scheme by the above definition.
Clearly, the graph $\G$ must be connected and regular,
as its adjacency matrix must be a sum of some adjacency matrices of $\A$.
Furthermore,
there exist polynomials $p_i$ ($i \in \I$) such that $A_i = p_i(A)$.
This shows that the notion of a quotient-polynomial graph
generalizes the notion of a distance-regular graph (see~\cite[\S4]{bcn89}),
as we drop the requirement on the degrees of these polynomials
and thus lose the equivalence
between the relations of $\A$ and distances in $\G$.

Given the parameters of an association scheme $\A$,
we can verify whether a sum of some adjacency matrices of $\A$
generates its Bose-Mesner algebra
-- each relation corresponding to such an adjacency matrix
thus gives us a quotient-polynomial graph.
A given association scheme may therefore correspond
to one or more quotient-polynomial graphs
(which need not be mutually non-isomorphic),
or none at all
(similarly to how an association scheme
may have multiple $P$-polynomial orderings,
thus corresponding to multiple distance-regular graphs).
In particular,
if a relation $\bigcup_{i \in \I'} R_i$ ($\I' \subset \I$)
of an association scheme $\A = (X, \{R_i \mid i \in \I\})$
gives rise to a quotient-polynomial graph,
this will also be true for the relation $\sum_{i \in \I'} R'_i$
of an association scheme $\A' = (X', \{R'_i \mid i \in \I\})$
with the same parameters as $\A$.

Herman and Maleki~\cite{hm24} define
a {\em relational} quotient-polynomial graph
as a quotient-polynomial graph $\G = (X, R)$
such that $R$ is a relation
of the corresponding $d$-class association scheme $\A$
with relation index set $\I = \{0, 1, \dots, d\}$.
In this case, we will assume $R_0 = \Id_X$ and $R_1 = R$.
For such an association scheme,
they define the {\em parameter array}
$$
[[k_1, k_2, \dots, k_d], [p^2_{11}, p^3_{11}, \dots p^d_{11}; p^3_{12}, p^4_{12}, \dots p^d_{12}; \dots; p^{d-1}_{1,d-2}, p^d_{1,d-2}; p^d_{1,d-1}]]
$$
and show that the remaining parameters of $\A$ can be computed from it.
This notation has been used to build a database of parameter arrays
(subject to limitations on number of classes, order and valency)
passing some basic feasibility conditions.
At the time of writing,
a subset of this database is available online~\cite{hm23a},
with tables for parameters arrays for QPGs
with $3$ classes of order at most $60$,
$4$ or $5$ classes of order at most $60$ and valency at most $12$,
and with $6$ classes of order at most $70$ and valency at most $12$
marking each entry either as infeasible (i.e., some further basic checks fail),
existing
(an association scheme with the corresponding parameters has been found)
or feasible
(all checks pass, but no example has been found).

We use the {\tt sage-drg} package~\cite{v18a,v19}
to perform more feasibility checks
for the parameter arrays marked as feasible.
For those parameter arrays which pass all the checks,
we attempt to identify known constructions.
The results are presented in Appendix~\ref{app:known}.
We also verify that the parameter sets
for association schemes with noncyclotomic eigenvalues in~\cite[\S4.3.2]{hm23}
pass the forbidden quadruple check (see~\cite[Corollary~4.2]{gvw21}),
as the other feasibility condition had already been verified.
For the parameter arrays which have neither been ruled out as infeasible
nor are there any known constructions for them,
we may use the technique presented in the following section
to study their feasibility.

\section{Eigenspace embeddings of association schemes}
\label{sec:embeddings}

Let $\A = (X, \R = \{R_i \mid i \in \I\})$ be an association scheme
with $n$ vertices, dual eigenmatrix $Q$ and multiplicities $m_j$ ($j \in \J$),
and $S_j$ ($j \in \J$) be one of its eigenspaces.
The entries of the corresponding minimal idempotent $E_j$
satisfy $(E_j)_{xy} = {Q_{ij} \over n}$ ($x, y \in X$)
if $(x, y) \in R_i$ ($i \in \I$).
Let $\ind_x \in \RR^X$ be the indicator vector of the vertex $x \in X$,
i.e., a unit vector with entry $1$ at index $x$ and entries $0$ elsewhere.
Then the vector $E_j \ind_x \in S_j$
is the orthogonal projection of $\ind_x$ onto the eigenspace $S_j$
and coincides with the column of $E_j$ at index $x$.
Consider the inner product of two such vectors:
for two vertices $x, y \in X$ such that $(x, y) \in R_i$,
we have
$$
\langle E_j \ind_x, E_j \ind_y \rangle = \ind_x^\top E_j^\top E_j \ind_y = \ind_x^\top E_j \ind_y = (E_j)_{xy} = {Q_{ij} \over n} .
$$
The inner product of $E_j \ind_x$ and $E_j \ind_y$
therefore only depends on the relation in which the vertices $x$ and $y$ are.
In particular, we have $\| E_j \ind_x \| = \sqrt{m_j \over n}$ ($x \in X$),
i.e.,
all the orthogonal projections of the vectors $\ind_x$ ($x \in X$) onto $S_j$
have the same norm.
Consequently,
the angle between two such projections only depends
on the relation in which the corresponding vertices are.
We may therefore define unit vectors
$u_x := \sqrt{n \over m_j} E_j \ind_x$ ($x \in X$),
and note that, for two vertices $x, y \in X$,
if $(x, y) \in R_i$ ($i \in \I$) holds,
then we have $\langle u_x, u_y \rangle = {Q_{ij} \over m_j}$.
The map $x \mapsto u_x$ is said to be a {\em spherical representation}
of the association scheme $\A$ in the eigenspace $S_j$.
A spherical representation is called {\em faithful} if it is injective.
Note that a spherical representation of $\A$ in the eigenspace $S_j$ is faithful
if and only if $S_j \ne \langle \ind_X \rangle$
and $j$ is not contained in any nontrivial dual imprimitivity set.

Let $\A' = (Y, \R')$ be a relation scheme with vertex set $Y \subseteq X$
and relations $\R' = \{R'_i \mid i \in \I'\}$ for some $\I' \subseteq \I$.
We say that the relation scheme $\A'$ {\em admits an embedding into $S_j$}
if there exist unit vectors $u'_x \in S_j$ ($x \in Y$)
such that for every two vertices $x, y \in Y$,
we have $\langle u'_x, u'_y \rangle = {Q_{ij} \over m_j}$
whenever $(x, y) \in R'_i$ ($i \in \I'$).
Clearly, if $R'_i = R_i|_Y$ holds for every $i \in \I'$,
then we have $\A' = \A|_Y$,
and we can just take $u'_x = u_x$ ($x \in Y$),
so the relation scheme $\A'$ admits an embedding into $S_j$.
Conversely, if no embedding of $\A'$ into $S_j$ exists,
then $\A'$ is not an induced subscheme of $\A$.

Given a relation scheme $\A'$,
we may therefore attempt to determine
the coefficients of the vectors $u'_x$ ($x \in Y$)
in terms of the coordinates with respect to an orthonormal basis
$\{e_h \mid h = 1, 2, \dots, m_j\}$ of $S_j$.
Let us write $u'_x = \sum_{h=1}^{m_j} a_{xh} e_h$,
where $a_{xh} \in \RR$ ($x \in Y$, $1 \le h \le m_j$).
We impose a linear order on the set $Y$,
say, by assuming $Y = \{1, 2, \dots, n'\}$,
and define a matrix $U := \{a_{xh}\}_{x,h=1}^{n',m_j}$
(i.e., the rows of $U$ correspond to the coefficients of the sought vectors).
Assume that $\F$ is a subfield of the field of real numbers $\RR$
such that the dual eigenmatrix $Q$ of $\A$ has entries from $\F$
(i.e., $Q \in \F^{\I \times \J}$).
We may build a matrix $C \in \F^{Y \times Y}$
such that $C_{xy} = {Q_{ij} \over m_j}$ holds
whenever $(x, y) \in R'_i$ ($x, y \in Y$, $i \in \I'$),
and pass it as an input to Algorithm~\ref{alg:unitvecs}
along with the chosen index $j \in \J$.
If the algorithm succeeds,
it computes all the entries in the matrix $U$,
thus giving an embedding of $\A'$ into $S_j$.
On the other hand, if the algorithm fails,
we may conclude that no such embedding exists.

\begin{algorithm}[t]
\begin{algorithmic}
\Require $j \in \J$, $C \in \F^{Y \times Y}$ such that
$C_{xy} = {Q_{ij} \over m_j} \Longleftrightarrow (x, y) \in R'_i$
($x, y \in Y$, $i \in \I'$)
\For{$x = 1, 2, \dots, n'$}
\Comment{$Y = \{1, 2, \dots, n'\}$}
	\State $h \gets 1$
	\For{$y = 1, 2, \dots, x-1$}
		\State $d \gets C_{xy} - \sum_{k=1}^{h-1} a_{xk} a_{yk}$
		\If{$h \le m_j \land a_{yh} \ne 0$}
			\State $a_{xh} \gets {d \over a_{yh}}$
			\State $h \gets h + 1$
		\ElsIf{$d \ne 0$}
			\State {\bf fail}
			\Comment{Cannot obtain the inner products}
		\EndIf
	\EndFor
	\State $s \gets \sum_{k=1}^{h-1} a_{xk}^2$
	\If{$s > 1$}
		\State {\bf fail}
		\Comment{The norm is larger than one}
	\ElsIf{$s < 1$}
		\If{$h > m_j$}
			\State {\bf fail}
			\Comment{The norm is smaller than one}
		\EndIf
		\State $a_{xh} \gets \sqrt{1 - s}$
		\State $h \gets h + 1$
	\EndIf
	\For{$k = h, h+1, \dots, m_j$}
		\State $a_{xk} \gets 0$
	\EndFor
\EndFor
\end{algorithmic}

\caption{The algorithm for computing the coefficients
of the unit vectors $u'_x$ ($x \in Y$) in an orthonormal basis of $S_j$.}
\label{alg:unitvecs}
\end{algorithm}

For an element $\beta \in \F$ such that $\beta > 0$,
we define the set
$\F \sqrt{\beta} = \{\alpha \sqrt{\beta} \mid \alpha \in \F\}$,
where $\sqrt{\beta}$ is the unique positive real number
such that $(\sqrt{\beta})^2 = \beta$.
Furthermore,
we define the set
$\F \sqrt{\F} = \bigcup_{\substack{\beta \in \F \\ \beta > 0}} \F\sqrt{\beta}$.
We note that the sets $\F \sqrt{\beta}$ are closed under addition,
and for $\gamma, \delta \in \F \sqrt{\beta}$,
we have $\gamma \delta \in \F$;
similarly, for $\alpha \in \F$, $\gamma \in \F \sqrt{\beta} \setminus \{0\}$,
we have ${\alpha \over \gamma} \in \F \sqrt{\beta}$.
This implies that there exist numbers
$\beta_h \in \F$, $\beta_h > 0$ ($1 \le h \le m_j$)
such that $a_{xh} \in \F \sqrt{\beta_h}$ for all $x \in Y$
-- i.e.,
all the entries of the $h$-th column of $U$ are elements of $\F \sqrt{\beta_h}$.
Therefore, we have $U \in (\F \sqrt{\F})^{Y \times m_j}$.

The {\tt eigenspace-embeddings} repository~\cite{v25}
contains an implementation of Algorithm~\ref{alg:unitvecs}
based on SageMath~\cite{sage24}.
We use the {\tt sage-drg} package~\cite{v18a,v19}
to compute the dual eigenmatrix $Q$
of an association scheme $\A$ with the given parameters.
The package has been adapted so that the computed parameters
(assuming they do not depend on a variable)
are returned in SageMath's implementation of the rational field $\Q$,
provided by the object {\tt QQ} (an element of class {\tt RationalField}),
or a minimal extension thereof
(an element of the class {\tt NumberField}).
In both cases, SageMath's implementation is based on PARI~\cite{pari23}.
Thus, $\F$ is a (possibly trivial) extension of $\Q$.
For the computation of the entries of $U$ in Algorithm~\ref{alg:unitvecs},
we implement a class {\tt IncompleteSqrtExtension}
to provide the required arithmetic in the pseudo-field $\F \sqrt{\F}$.
Note that the latter set is closed under multiplication,
but not under addition, and is therefore not a field,
yet it is implemented as a subclass of {\tt NumberField},
with addition of elements of $\F \sqrt{\F}$
not belonging to a common subset $\F \sqrt{\beta}$ triggering an error
(note that this cannot happen in Algorithm~\ref{alg:unitvecs}).
Such an approach has a great performance and correctness advantage
over using the symbolic ring,
provided by SageMath's object {\tt SR}
(which is still used by {\tt sage-drg} in the presence of variables),
as the generality of the latter means that
the obtained expressions often cannot be adequately simplified,
thus leading to bad performance and incorrect results.

Since the existence of an embedding into the eigenspace $S_j$
only depends on the parameters of $\A$ and not its structure,
we may use the method described above to check for feasibility
of parameters of association schemes
and possibly attempt to find new constructions or characterizations
for a given parameter set.
In particular,
this method will prove to be useful
when some substructure of the association scheme is already known,
as we can then build on this substructure
and explore which possibilities are admissible
until either a contradiction occurs,
or the desired characterization or construction has been reached.

Suppose that $\A$ is an imprimitive $d$-class association scheme
with a nontrivial imprimitivity set $\tilde{0}$.
Then,
for every equivalence class $Y$ of $R_{\tilde{0}}$,
$\A|_Y$ is a $d'$-class association scheme on $n'$ vertices,
where $d' = |\tilde{0}| - 1 < d$ and $n' < n$.
We thus obtain smaller association schemes on subsets of vertices of $\A$,
and their parameters are determined by the parameters of $\A$.
Even when $\A$ is only specified by its parameters
and its precise structure is not known,
the subschemes on these subsets
might be determined or characterized by the parameters,
allowing further consideration by the above method.

Let $\overline{0}$ be the dual imprimitivity set
corresponding to the imprimitivity set $\tilde{0}$,
and let $\{X_\ell \mid \ell = 1, 2, \dots, \tilde{n}\}$ be the set
of the equivalence classes of $R_{\tilde{0}}$.
As
${Q_{ij} \over m_j} = {\overline{Q}_{i \overline{\jmath}} \over \overline{m}_{\overline{\jmath}}}$
holds for all $i \in \tilde{0}$, $\overline{\jmath} \in \overline{\J}$
and $j \in \overline{\jmath}$,
we see that an embedding of $\A|_{X_\ell}$ ($1 \le \ell \le \tilde{n}$)
into $S_{\overline{\jmath}}|_{X_\ell}$ can be naturally extended
to an embedding into $S_j$ for each $j \in \overline{\jmath}$.
In particular,
when the imprimitivity set is of the form $\tilde{0} = \{0, i^*\}$,
the graph $(X, R_{i^*})$ is isomorphic to a union of $(k_{i^*}+1)$-cliques,
and we call the sets $X_\ell$ ($1 \le \ell \le \tilde{n}$)
the {\em $R_{i^*}$-cliques},
and the embeddings of their vertices
into $S_j$ $(j \in \J \setminus \overline{0})$
correspond to the vertices of a $k_{i^*}$-dimensional regular simplex,
thus spanning a $k_{i^*}$-dimensional subspace of $S_j$.
Generalizing this to the case when $|\tilde{0}| > 2$,
we may call the sets $X_\ell$ ($1 \le \ell \le \tilde{n}$)
the {\em $R$-cliques},
where $R = \bigcup_{i \in \tilde{0} \setminus \{0\}} R_i$.

For several parameter sets,
we will determine the possible induced subschemes of $\A$
on a small number of $R$-cliques (or subsets thereof).
If we manage to determine that none of these possibilities
admit an embedding into an eigenspace of $\A$,
we then conclude that such an association scheme does not exist.
To this end,
we will consider eigenspaces $S_j$ with $\overline{m}_{\overline{\jmath}} > 1$
of small dimension.
In particular,
we will consider cases
when ${m_j \over \overline{m}_{\overline{\jmath}}} \le 3$.
Once we manage to construct a set $Y \subseteq X$
such that the vectors $\{u'_x \mid x \in Y\}$ span the eigenspace $S_j$,
we may use the intersection numbers of $\A$
to determine the possible choices for the relations $R_{xy} \in \R$ ($x \in Y$)
such that $(x, y) \in R_{xy}$ for a candidate vertex $y \not\in Y$,
and examine which of the corresponding vectors $u'_y$ are unit vectors.
We may then try to find a subset $Z$ of these vectors such that $|Y| + |Z| = n$
and for each pair of vertices $y, z \in Z$
we have $\langle u'_y, u'_z \rangle = {Q_{ij} \over m_j}$ for some $i \in \I$
(i.e., $(y, z) \in R_i$).
Finally,
we may verify that $(Y \cup Z, \R)$ is indeed an association scheme
with the parameters of $\A$.
Alternatively,
if no such set $Z$ can be found for any of the choices of $Y$
such that the association scheme $\A$
necessarily contains a subscheme isomorphic to $\A|_Y$,
we conclude that $\A$ does not exist.

In particular, given an association scheme $\A = (X, \R)$,
we will define the vertex subsets
$X^{(t)} = \bigcup_{\ell=1}^t X_\ell$ ($1 \le t \le \tilde{n}$),
the induced subschemes
$\A^{(t)} = \A|_{X^{(t)}} = (X^{(t)}, \{R_i^{(t)} \mid i \in \I\} \setminus \{\emptyset\})$
with $R_i^{(t)} = R_i|_{X^{(t)}}$ ($i \in \I$),
and the graphs $\G^{(t)}_i = (X^{(t)}, R_i^{(t)})$
($i \in \I$, $R_i \ne \Id_X$).
We will consider the candidate relation schemes for $\A^{(t)}$
for certain choices of $t$
by considering the possible choices of the graphs $\G^{(t)}_i$ ($i \in \I$),
as well as some other induced subschemes and their corresponding graphs,
and attemtpt to find the embeddings of these relation schemes
into an eigenspace $S_j$ of $\A$ for some $j \in \J \setminus \overline{0}$.

\subsection{A small example}
\label{ssec:example}

We will now apply the method described above on a small example.
Consider a formally self-dual $3$-class association scheme $\A$ on $8$ vertices
with index sets $\I = \J = \{0, 1, 2, 3\}$
given by its eigenmatrices
$$
P = Q = \begin{pmatrix}
1 &  3 &  3 &  1 \\
1 &  1 & -1 & -1 \\
1 & -1 & -1 &  1 \\
1 & -3 &  3 & -1
\end{pmatrix}.
$$
The association scheme $\A$ is imprimitive
with imprimitivity set $\tilde{0} = \{0, 2\}$
and the corresponding dual imprimitivity set $\overline{0} = \{0, 3\}$.
Since the scheme is formally self-dual,
the roles of these two sets can also be reversed.
We note that $R_{\tilde{0}}$
has two equivalence classes $X_1$ and $X_2$ of size $4$
-- they are the $R_2$-cliques of $\A$.

We will consider the embedding of $\A^{(1)} = \A|_{X_1}$
into the eigenspace $S_1$ of dimension $m_1 = 3$.
We use Algorithm~\ref{alg:unitvecs} to compute the matrix $U$
with the coefficients of the unit vectors $u'_x \in S_1$ ($x \in X_1$).
$$
U = \begin{pmatrix}
1 &  0 &  0 \\
-{1 \over 3} & {2 \sqrt{2} \over 3} & 0 \\
-{1 \over 3} & -{\sqrt{2} \over 3} & {\sqrt{6} \over 3} \\
-{1 \over 3} & -{\sqrt{2} \over 3} & -{\sqrt{6} \over 3}
\end{pmatrix}
$$
Since the above matrix has full column rank,
we may consider the candidates for the remaining four vertices
in the $R_2$-clique $X_2$.
As we have $k_1 = 3$ and $k_3 = 1$,
it follows that each vertex of $X_2$
is in relation $R_3$ with precisely one vertex of $X_1$
and in relation $R_1$ with the remaining three vertices of $X_1$.
We thus have precisely four candidates for the vertices $y \in X_2$,
and we find that the coefficients
of the correspoding unit vectors $u'_y \in S_1$
are given by the matrix $-U$.
By considering the inner products
between the rows of the matrices $U$ and $-U$,
we build the relation matrix $R$ of the obtained relation scheme.
$$
R = \begin{pmatrix}
0 & 2 & 2 & 2 & 3 & 1 & 1 & 1 \\
2 & 0 & 2 & 2 & 1 & 3 & 1 & 1 \\
2 & 2 & 0 & 2 & 1 & 1 & 3 & 1 \\
2 & 2 & 2 & 0 & 1 & 1 & 1 & 3 \\
3 & 1 & 1 & 1 & 0 & 2 & 2 & 2 \\
1 & 3 & 1 & 1 & 2 & 0 & 2 & 2 \\
1 & 1 & 3 & 1 & 2 & 2 & 0 & 2 \\
1 & 1 & 1 & 3 & 2 & 2 & 2 & 0
\end{pmatrix}
$$
It can be easily verified that $R$ is the relation matrix
of the association scheme corresponding to the $3$-cube $Q_3$.
This confirms the well-known fact that this is the unique association scheme
with the parameters given above.

\section{Nonexistence results}
\label{sec:nonex}

We will now attempt to use the technique
described in Section~\ref{sec:embeddings}
to study the parameter sets marked as feasible in~\cite{hm23a}
which pass all known feasibility conditions
(see Appendix~\ref{app:known} for those that do not).
We find three parameter sets for which we show nonexistence,
of which two correspond to imprimitive $4$-class association schemes
and one corresponds to an imprimitive $5$-class association scheme.

Besides the software and algorithms mentioned in Section~\ref{sec:embeddings},
we also use the following software
which is included in the SageMath computer algebra system~\cite{sage24}:
{\tt nauty}~\cite{m90} for graph generation,
{\tt bliss}~\cite{jk07a,bliss21} for automorphism group computation,
GAP~\cite{g24} for group manipulation,
and GLPK~\cite{glpk20} for solving integer linear programs
(for graph coloring).

\subsection{QPG with parameter array $[[12, 4, 4, 24], [6, 0, 3; 0, 1; 2]]$}
\label{ssec:p4-12-45-52}

Let $\A$ be a $4$-class association scheme with intersection numbers
\begin{equation}
\label{eqn:p4-12-45-52}
\begin{aligned}
(p^0_{ij})_{i,j=0}^4 &= \begin{pmatrix}
1 &  0 & 0 & 0 &  0 \\
0 & \mathit{12} & 0 & 0 &  0 \\
0 &  0 & \mathit{4} & 0 &  0 \\
0 &  0 & 0 & \mathit{4} &  0 \\
0 &  0 & 0 & 0 & \mathit{24}
\end{pmatrix}, &
(p^1_{ij})_{i,j=0}^4 &= \begin{pmatrix}
0 & 1 & 0 & 0 &  0 \\
1 & 3 & 2 & 0 &  6 \\
0 & 2 & 0 & 0 &  2 \\
0 & 0 & 0 & 0 &  4 \\
0 & 6 & 2 & 4 & 12
\end{pmatrix}, \\
(p^2_{ij})_{i,j=0}^4 &= \begin{pmatrix}
0 & 0 & 1 & 0 &  0 \\
0 & \mathit{6} & 0 & 0 &  6 \\
1 & 0 & 1 & 2 &  0 \\
0 & 0 & 2 & 2 &  0 \\
0 & 6 & 0 & 0 & 18
\end{pmatrix}, &
(p^3_{ij})_{i,j=0}^4 &= \begin{pmatrix}
0 &  0 & 0 & 1 &  0 \\
0 &  \mathit{0} & \mathit{0} & 0 & 12 \\
0 &  \mathit{0} & 2 & 2 &  0 \\
1 &  0 & 2 & 1 &  0 \\
0 & 12 & 0 & 0 & 12
\end{pmatrix}, \\
(p^4_{ij})_{i,j=0}^4 &= \begin{pmatrix}
0 & 0 & 0 & 0 &  1 \\
0 & \mathit{3} & \mathit{1} & \mathit{2} &  6 \\
0 & \mathit{1} & 0 & 0 &  3 \\
0 & \mathit{2} & 0 & 0 &  2 \\
1 & 6 & 3 & 2 & 12
\end{pmatrix}.
\end{aligned}
\end{equation}
Note that each of the above matrices $(p^h_{ij})_{i,j=0}^4$ ($0 \le h \le 4$)
shows the number of vertices in relations $R_i$ and $R_j$
from a pair of vertices in relation $R_h$,
or, equivalently,
the coefficients of $A_h$ in the product $A_i A_j$.
The intersection numbers appearing in the parameter array are shown in italics
-- note that due to $\A$ being a symmetric association scheme,
the matrices above are symmetric
and some of these parameters are thus marked twice.
The same notation will also be used in the remaining examples.

The graph $\G_1 = (X, R_1)$ is a quotient-polynomial graph on $45$ vertices
with parameter array $[[12, 4, 4, 24],$ $[6, 0, 3; 0, 1; 2]]$.
The association scheme $\A$ is imprimitive
with imprimitivity set $\tilde{0} = \{0, 2, 3\}$.
The dual eigenmatrix of $\A$ is
$$
Q = \begin{pmatrix}
1 & 10 & 20 &  4 & 10 \\
1 & \frac{5 \sqrt{2}}{2} &  0 & -1 & -\frac{5 \sqrt{2}}{2} \\
1 & \frac{5}{2} & -10 &  4 & \frac{5}{2} \\
1 & -5 &  5 &  4 & -5 \\
1 & -\frac{5 \sqrt{2}}{4} & 0 & -1 & \frac{5 \sqrt{2}}{4}
\end{pmatrix}.
$$
By the ordering of eigenspaces used in the above matrix,
the corresponding dual imprimitivity set is $\overline{0} = \{0, 3\}$,
and we also have $\overline{1} = \overline{4} = \{1, 4\}$
and $\overline{2} = \{2\}$.
Let $\{X_\ell \mid 1 \le \ell \le 5\}$
be the set of the equivalence classes of $R_{\tilde{0}}$,
and note that the graphs $\G_2|_{X_\ell} = (X_\ell, R_2|_{X_\ell})$
($1 \le \ell \le 5$)
are isomorphic to the graph $K_3 \square K_3$,
or, equivalently, the Hamming graph $H(2, 3)$.
We will call their maximal cliques (of size $3$) {\em lines}
-- taking the vertices as points,
this gives us a geometry of generalized quadrangle $\GQ(2, 1)$.
The sets $X_\ell$ ($1 \le \ell \le 5$) are thus
the $(R_2 \cup R_3)$-cliques of $\A$,
and we also have
$X_\ell = Y_{\ell 1} \cup Y_{\ell 2} \cup Y_{\ell 3} = Z_{\ell 1} \cup Z_{\ell 2} \cup Z_{\ell 3}$,
where $Y_{\ell r}$ and $Z_{\ell s}$ ($1 \le \ell \le 5$, $1 \le r, s \le 3$)
are the lines of $\G_2|_{X_\ell}$
such that $|Y_{\ell r} \cap Z_{\ell s}| = 1$.
In particular,
$\{Y_{\ell 1}, Y_{\ell 2}, Y_{\ell 3}\}$
and $\{Z_{\ell 1}, Z_{\ell 2}, Z_{\ell 3}\}$
are partitions of $X_\ell$ into disjoint lines
-- we call these partitions the {\em spreads} of $\G_2|_{X_\ell}$.

We will consider embeddings
of subschemes induced on three $(R_2 \cup R_3)$-cliques of $\A$
into the eigenspace $S_1$ of dimension $m_1 = 10$.
We note that $\overline{m}_{\overline{1}} = 4$
and therefore ${m_1 \over \overline{m}_{\overline{1}}} = {5 \over 2} < 3$,
which may severely restrict
which of such subschemes admit an embedding into $S_1$.
Since we will encounter a similar situation later,
let us first prove the following lemma.

\begin{lemma}
\label{lem:hamming}
Let $\A = (X, \R = \{R_i \mid i \in \I\})$ be an association scheme
with imprimitivity set $\tilde{0}$
and intersection numbers $p^j_{ij} = e-1$ and $p^j_{i'j} = 0$
for some $i \in \tilde{0}$, $j \in \I \setminus \tilde{0}$,
and all $i' \in \tilde{0} \setminus \{0, i\}$.
Suppose that $Y$ and $Y'$ are equivalence classes of $R_{\tilde{0}}$
such that the graphs $\G_i|_Y$ and $\G_i|_{Y'}$
are both isomorphic to the Hamming graph $H(d, e)$ for some $d < e$,
and for all $x \in Y$, $y \in Y'$,
$(x, y) \in R_{j'}$ holds for some $j' \in \tilde{\jmath}$.
Then the graph $\G_j|_{Y \cup Y'}$ is isomorphic to $e^{d-1} K_{e,e}$,
and the partitions of $Y$ and $Y'$
corresponding to the connected components of $\G_j|_{Y \cup Y'}$
coincide with spreads of $\G_i|_Y$ and $\G_i|_{Y'}$
(i.e., they are partitioned into maximal $R_i$-cliques of size $e$).
\end{lemma}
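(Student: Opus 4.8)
The plan is to analyze the bipartite graph $B := \G_j|_{Y \cup Y'}$ directly. Since $j \notin \tilde{0}$, the relation $R_j$ never occurs within an equivalence class of $R_{\tilde{0}}$, so $B$ has edges only between $Y$ and $Y'$; write $M(y) := \{x \in Y \mid (x,y) \in R_j\}$ and $N(x) := \{y \in Y' \mid (x,y) \in R_j\}$ for the two sides. First I would record that every $N(x)$ is nonempty: fixing $y_0 \in Y'$ with $(x,y_0) \in R_c$ (so $c \in \tilde{\jmath}$ by hypothesis) and splitting $Y'$ by the class of $y_0$ relative to $y_0$ gives $|N(x)| = \sum_{a \in \tilde{0}} p^{c}_{aj}$, and since $c \sim j$ the very definition of $\sim$ furnishes some $a \in \tilde{0}$ with $p^{c}_{aj} \neq 0$, whence $|N(x)| \ge 1$. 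The heart of the argument is then the \emph{clique step}: for any $y$ with $M(y) \neq \emptyset$ and any $x \in M(y)$, the hypotheses $p^j_{ij} = e-1$ and $p^j_{i'j} = 0$ (for $i' \in \tilde{0}\setminus\{0,i\}$) say that, among the vertices of $M(y)$, exactly $e-1$ are $R_i$-adjacent to $x$ and none are joined to $x$ by any other class of $\tilde{0}$. Thus $M(y)$ is an $(e-1)$-regular clique of $\G_i|_Y \cong H(d,e)$, hence a clique on $e$ vertices; as any three mutually adjacent vertices of a Hamming graph are collinear, such a clique is a maximal $R_i$-clique (a line), so $|M(y)| = e$. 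The identical argument in $Y'$ shows each nonempty $N(x)$ is a line of size $e$. Combined with nonemptiness, $B$ is biregular of valency $e$.

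It remains to show that $B$ is a disjoint union of complete bipartite graphs whose parts are these lines; equivalently, that whenever $x, x'$ lie on a common line $M(y)$ one has $N(x) = N(x')$. I expect this biclique step to be the main obstacle, and I would settle it by a double count. Fix $x$ and set $U := \bigcup_{y \in N(x)} M(y)$, a union of lines of $Y$ all through $x$; let $t$ be the number of \emph{distinct} lines occurring, so $t \le d$ because a point of $H(d,e)$ lies on exactly $d$ lines, and distinct lines through $x$ meet only in $x$, giving $|U| = 1 + t(e-1)$. Counting the $R_j$-edges between $U$ and $N(x)$ from the $N(x)$-side gives $e \cdot e = e^2$ (each $y \in N(x)$ contributes all $e$ vertices of $M(y) \subseteq U$), while counting from the $U$-side gives $e + se + (|U|-1-s)$, where $s$ is the number of $x' \in U \setminus \{x\}$ with $N(x') = N(x)$ (each remaining $x'$ lies in some $M(y)$, so $N(x')$ and $N(x)$ are distinct lines of $Y'$ sharing that point $y$, contributing $|N(x')\cap N(x)| = 1$). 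Equating the two counts yields $s + t = e$. But any $x'$ with $N(x') = N(x)$ lies in $M(y)$ for every $y \in N(x)$, hence in $\bigcap_{y \in N(x)} M(y)$; if $t \ge 2$ this intersection is $\{x\}$, forcing $s = 0$ and therefore $t = e$, contradicting $t \le d < e$. Hence $t = 1$, so all $M(y)$ ($y \in N(x)$) coincide with a single line $M \ni x$, and all $e$ of its points share the neighbourhood $N(x)$.

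This is exactly where the strict inequality $d < e$ is used. Its consequence is that the connected component of $B$ containing $x$ is precisely $M \sqcup N(x) \cong K_{e,e}$, with $M$ a line of $Y$ and $N(x)$ a line of $Y'$. Finally I would observe that, as the components partition $Y \cup Y'$, their $Y$-parts partition $Y$ into $e^{d}/e = e^{d-1}$ maximal $R_i$-cliques of size $e$ (a spread of $\G_i|_Y$ in the stated sense), and likewise for $Y'$; hence $\G_j|_{Y \cup Y'} \cong e^{d-1} K_{e,e}$, as claimed.
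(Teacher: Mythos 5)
Your proof is correct, and its first half coincides with the paper's: both arguments use $p^j_{ij} = e-1$ and $p^j_{i'j} = 0$ to show that every nonempty $R_j$-neighbourhood is an $(e-1)$-regular clique of $H(d,e)$, hence a line of size $e$. Where you diverge is in the key step forcing the components to be bicliques. The paper argues by pigeonhole: the $e$ vertices of the line $L' = N(x)$ have neighbourhoods which are lines of $H(d,e)$ through $x$, of which there are only $d < e$, so two distinct $y, y' \in L'$ share the same neighbourhood $L$; since $L'$ is the unique maximal $R_i$-clique through the adjacent pair $y, y'$, every vertex of $L$ then has neighbourhood exactly $L'$, and $e$-regularity closes $L \cup L'$ off as a $K_{e,e}$ component. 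You instead run a double count of the edges between $U = \bigcup_{y \in N(x)} M(y)$ and $N(x)$, obtain $s + t = e$, and eliminate $t \ge 2$ via the fact that two distinct lines through $x$ meet only in $x$ --- again using precisely the inequality $t \le d < e$. Both mechanisms exploit the same inequality; the pigeonhole version is shorter, while your count is more laborious but buys something the paper glosses: you make the biregularity explicit via $|N(x)| = \sum_{a \in \tilde{0}} p^c_{aj} \ge 1$, guaranteed by the definition of $\sim$ and the hypothesis that all pairs between $Y$ and $Y'$ lie in relations from $\tilde{\jmath}$, whereas the paper's assertion that $\sum_{i' \in \tilde{0}} p^j_{i'j} = e$ makes $\G_j|_{Y \cup Y'}$ $e$-regular strictly speaking only fixes the degree of vertices already incident to an edge. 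One small loose end on your side: you verify nonemptiness only for $N(x)$ with $x \in Y$, but the final claim that the component-parts partition $Y'$ also needs $M(y) \ne \emptyset$ for every $y \in Y'$; the symmetric computation (or an edge count against $|Y| = |Y'| = e^d$) supplies it, so this is cosmetic rather than a gap.
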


\begin{proof}
Since $\sum_{i' \in \tilde{0}} p^j_{i'j} = e$,
the graph $\G_j|_{Y \cup Y'}$ is $e$-regular;
since $Y$ and $Y'$ are equivalence classes of $R_{\tilde{0}}$,
it is also bipartite with bipartition $Y + Y'$.
Let $x$ be a vertex from $Y$,
and denote the set of neighbours of $x$ by $L'$.
As $p^j_{i'j} = 0$ for all $i' \in \tilde{0} \setminus \{0, i\}$,
the set $L' \subseteq Y'$ forms a maximal $R_i$-clique.
Furthermore,
the set of neighbours of each vertex from $L'$
forms a maximal $R_i$-clique inside $Y$ containing $x$.
Since $x$ is contained in precisely $d$ maximal $R_i$-cliques,
which is less than $|L'| = e$,
by the pigeonhole principle,
there exist two distinct vertices $y, y' \in L'$
with the same set of neighbours,
which we denote by $L$.
Then,
every vertex of $L$ is adjacent to all vertices
in the unique maximal $R_i$-clique containing $y$ and $y'$
-- i.e.,
$\G_j|_{L \cup L'}$ is a connected component of $\G_j|_{Y \cup Y'}$
and is isomorphic to $K_{e,e}$.
Since $x$ was arbitrary,
it follows that $\G_j|_{Y \cup Y'}$ consists of $e^{d-1}$
connected components isomorphic to $K_{e,e}$
whose bipartitions consist of maximal $R_i$-cliques in $Y$ and $Y'$.
\end{proof}

We may now give the following result.

\begin{theorem}
\label{thm:p4-12-45-52}
An association scheme $\A$ with intersection numbers \eqref{eqn:p4-12-45-52}
does not exist.
\end{theorem}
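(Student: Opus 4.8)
The plan is to assume that such an $\A$ exists and to derive a contradiction from its spherical representation in the eigenspace $S_1$ of dimension $m_1 = 10$, exploiting the smallness of the ratio $m_1/\overline{m}_{\overline{1}} = 5/2$. First I would record the inner products the representation forces: dividing column $1$ of $Q$ by $m_1$, two vertices $x,y$ with $(x,y) \in R_i$ are sent to unit vectors $u_x,u_y$ with $\langle u_x,u_y\rangle$ equal to $1/4$, $-1/2$, $\sqrt{2}/4$, $-\sqrt{2}/8$ for $i = 2,3,1,4$ respectively. Restricted to one $(R_2\cup R_3)$-clique $X_\ell$, the induced scheme is the Hamming scheme $H(2,3)$, whose representation in the relevant $4$-dimensional eigenspace sends $(r,c)\mapsto \tfrac{1}{\sqrt2}(a_r\oplus b_c)$ for two systems $\{a_r\}$, $\{b_c\}$ of equilateral-triangle vectors lying in orthogonal $2$-planes; thus the vectors of a single clique span exactly $\overline{m}_{\overline{1}} = 4$ dimensions, and a direct Gram-matrix computation (diagonalising in the $I_3,J_3$ tensor basis) shows that the vectors of two cliques span $8$ dimensions.

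Next I would analyse three cliques. By Lemma~\ref{lem:hamming}, applied with $i = 2$, $j = 1$, $e = 3$, $d = 2$ (legitimate since $p^1_{21} = 2 = e-1$ and $p^1_{31} = 0$), the graph $\G_1$ between any two cliques is a disjoint union $3K_{3,3}$ whose components match a spread of one clique with a spread of the other; hence every vertex has a full line as its set of $R_1$-neighbours in each other clique, and the remaining six cross-vertices are its $R_4$-neighbours. The combinatorial freedom is then the choice, at each clique, of which of the two parallel classes of $\GQ(2,1)$ is used toward each partner, together with the bijections identifying matched lines. Using the intersection numbers — in particular that an $R_1$-pair has $p^1_{11} = 3$ common $R_1$-neighbours, none of which can lie in the two cliques containing the pair, so they are distributed as lines $L_x\cap L_y$ over the three remaining cliques — I would cut the possibilities for $\A|_{X_1\cup X_2\cup X_3}$ down to a short list up to isomorphism, enumerated by computer search.

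For each surviving configuration I would build the Gram matrix $C$ with entries $Q_{i1}/m_1$ as above and feed it to Algorithm~\ref{alg:unitvecs} with $j = 1$. The expectation, which I would verify, is that in every case the prescribed $27$ unit vectors are forced to span a $12$-dimensional space: tracking the inner products shows that each pairwise matching couples the row/column $2$-planes of the two cliques by a $45^\circ$ rotation, so that a third clique contributes two mutually orthogonal fresh $2$-planes beyond the $8$ dimensions already used, and the orthogonality of its two parallel classes forces these to be distinct. Since $12 > 10 = m_1$, the algorithm reports failure, being unable to realise the $27$ vectors inside a space of dimension only $10$; hence no embedding of $\A|_{X_1\cup X_2\cup X_3}$ into $S_1$ exists.

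Finally, since any association scheme with intersection numbers~\eqref{eqn:p4-12-45-52} must induce one of the enumerated subschemes on three of its $(R_2\cup R_3)$-cliques, and such a subscheme would necessarily embed into $S_1$, the failure of all embeddings is the desired contradiction, proving nonexistence. I expect the main obstacle to lie in the enumeration of the second paragraph: proving that the intersection numbers genuinely restrict the spread-and-bijection data to the short list, and in particular dealing with the degenerate matchings in which two cliques share an entire $R_1$-line in the third (the $(3,0,0)$ rather than $(1,1,1)$ distribution of common neighbours). Once the configurations are pinned down, the dimension count $12 > 10$ that defeats each of them is comparatively routine.
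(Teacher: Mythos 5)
Your setup is the same as the paper's: the inner products $Q_{i1}/m_1$ are computed correctly, Lemma~\ref{lem:hamming} is applied with the right intersection numbers to get that $\G_1$ between any two $(R_2\cup R_3)$-cliques is $3K_{3,3}$ matching spreads, the candidates for $\A|_{X_1\cup X_2\cup X_3}$ are enumerated from the spread choices (the paper finds six, up to isomorphism), and each candidate is tested with Algorithm~\ref{alg:unitvecs} in $S_1$ with $m_1=10$. But your key claim --- that \emph{every} configuration forces the $27$ unit vectors to span $12>10$ dimensions, so that all embeddings fail and nonexistence follows already at the three-clique level --- is false. The actual computation shows that exactly one of the six configurations (the one in which different spreads are used toward the two partners at two of the three cliques) \emph{does} admit an embedding into $S_1$; moreover its $27\times 10$ coefficient matrix has full column rank, i.e., the $27$ vectors span exactly $10$ dimensions, not $12$. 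Your heuristic that a third clique always contributes two fresh orthogonal $2$-planes beyond the $8$ dimensions of the first two cliques breaks down precisely in this case, so the contradiction you plan to reach cannot be completed.

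Because one three-clique subscheme survives, the proof needs an extension step that your proposal omits entirely. Since the surviving embedding has full column rank, it pins down the orthonormal basis of $S_1$ uniquely; each vertex $y\in X_4\cup X_5$ must be in relation $R_1$ with precisely the three vertices of one line in each of $X_1,X_2,X_3$, giving $6^3=216$ candidate inner-product patterns for $u'_y$ against the $27$ known vectors. One then checks that none of these $216$ candidates yields a unit vector in $S_1$, and only from this does the nonexistence of $\A$ with intersection numbers~\eqref{eqn:p4-12-45-52} follow. (As a side remark, your worry about a $(3,0,0)$ distribution of the $p^1_{11}=3$ common neighbours is subsumed by the spread-choice enumeration: the choice of $\G^{(3)}_1$ already determines the relation scheme $\A^{(3)}$ uniquely, so no separate bijection data needs to be tracked.) As written, your argument would terminate with the false assertion that all embeddings fail, so it does not prove the theorem without adding this fourth-vertex (and fifth-clique) analysis.
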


\begin{proof}
Since $p^1_{12} = 2$ and $p^1_{13} = 0$,
we may apply Lemma~\ref{lem:hamming} to conclude that
for each choice of $(R_2 \cup R_3)$-cliques $X_\ell$ and $X_{\ell'}$ of $\A$
($1 \le \ell < \ell' \le 5$),
the graph $\G_1|_{X_\ell \cup X_{\ell'}}$ is isomorphic to $3K_{3,3}$,
with the partitions of vertices of $X_\ell$ and $X_{\ell'}$
corresponding to the connected components of $\G_1|_{X_\ell \cup X_{\ell'}}$
coincinding with spreads of $\G_2|_{X_\ell}$ and $\G_2|_{X_{\ell'}}$.

We will now consider the possibilities for the relation scheme $\A^{(3)}$.
There are six mutually non-isomorphic possibilities for the graph $\G^{(3)}_1$:
either the same spread is used in each of $\G_1|_{X_\ell}$ ($1 \le \ell \le 3$)
to determine the edges to the other two $(R_2 \cup R_3)$-cliques,
and $\G^{(3)}_1$ has one, two or three connected components,
or different spreads are used for one, two or three
of $\G_1|_{X_\ell}$ ($1 \le \ell \le 3$),
see Figure~\ref{fig:a3}.
The choice of this graph
thus uniquely determines the relation scheme $\A^{(3)}$.

\begin{figure}[t]
\makebox[\textwidth][c]{
	\beginpgfgraphicnamed{fig-a3}
	\begin{tikzpicture}[style=thick,scale=0.7]
		\input{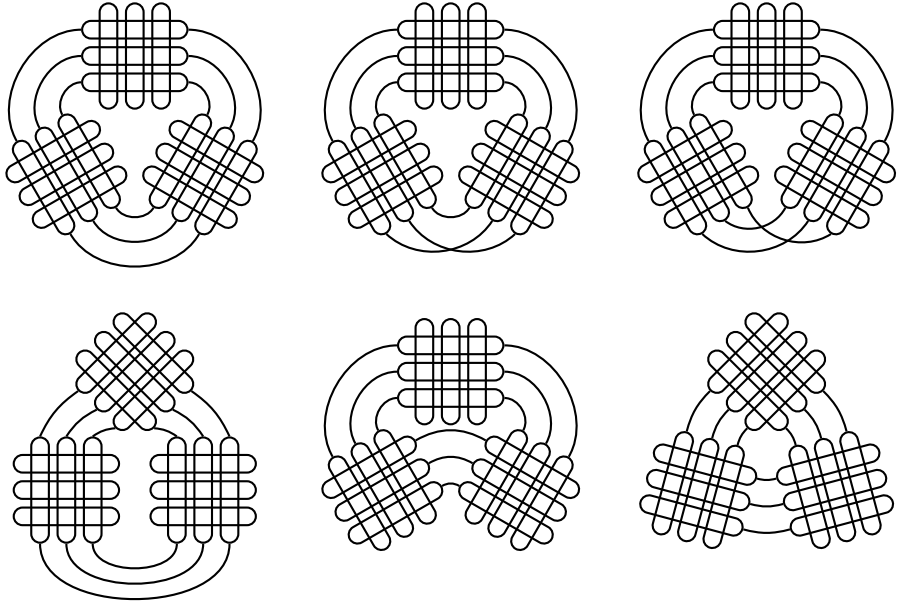}
	\end{tikzpicture}
	\endpgfgraphicnamed
}

\caption{The candidates for the relation scheme $\A^{(3)}$.
In each case,
the lines are represented with rounded rectangles
and form three $\GQ(2, 1)$ geometries;
the vertices are implied at intersections of lines.
Two distinct vertices are in relation $R^{(3)}_1$
if they are contained in two lines connected by an edge,
in relation $R^{(3)}_2$ if they are contained in a common line,
in relation $R^{(3)}_3$ if they are contained in distinct lines
of the same $\GQ(2, 1)$,
and in relation $R^{(3)}_4$ otherwise.
}
\label{fig:a3}
\end{figure}

For each of these possibilities,
we thus build a candidate for the relation scheme $\A^{(3)}$
and attempt to compute the corresponding matrix $U$
with the coefficients of the unit vectors $u'_x \in S_1$ ($x \in X^{(3)}$)
using Algorithm~\ref{alg:unitvecs}.
We find all the coefficients can only be determined
in the case when different spreads are used
to determine the edges from two of the three $(R_2 \cup R_3)$-cliques,
and thus conclude that the relation schemes corresponding to the other cases
do not admit an embedding into $S_1$.

Let us consider the sole remaining possibility.
Without loss of generality,
we may assume that the connected components
of $\G_1|_{X_1 \cup X_2}$, $\G_1|_{X_1 \cup X_3}$ and $\G_1|_{X_2 \cup X_3}$
are $Y_{1r} \cup Y_{2r}$, $Y_{1r} \cup Y_{3r}$ and $Z_{2s} \cup Z_{3s}$
($1 \le r, s \le 3$),
respectively.
The corresponding matrix of coefficients
has full column rank,
thus uniquely determining the orthonormal basis of $S_1$ being used.

By the argument above,
each vertex $y \in X_4 \cup X_5$ must be in relation $R_1$
precisely with all vertices of one line within each of $X_1, X_2, X_3$.
Since there are $6$ lines in each of $\G_2|_{X_\ell}$ ($1 \le \ell \le 3$),
we examine the $6^3 = 216$ candidates for such vertices $y$
and attempt to find the corresponding unit vectors $u'_y$.
However, we find that no such unit vectors exist,
from which it follows that the association scheme $\A$ does not exist.
\end{proof}

The
\href{https://nbviewer.org/github/jaanos/eigenspace-embeddings/blob/main/QPG4-12-45-52.ipynb}{\tt QPG4-12-45-52.ipynb}
notebook on the {\tt eigenspace-embeddings} repository~\cite{v25}
illustrates the computation needed to obtain the above result.

\subsection{QPG with parameter array $[[8, 8, 4, 24], [1, 0, 2; 2, 1; 1]]$}
\label{ssec:p4-8-45-18}

Let $\A$ be a $4$-class association scheme with intersection numbers
\begin{equation}
\label{eqn:p4-8-45-18}
\begin{aligned}
(p^0_{ij})_{i,j=0}^4 &= \begin{pmatrix}
1 & 0 & 0 & 0 &  0 \\
0 & \mathit{8} & 0 & 0 &  0 \\
0 & 0 & \mathit{8} & 0 &  0 \\
0 & 0 & 0 & \mathit{4} &  0 \\
0 & 0 & 0 & 0 & \mathit{24}
\end{pmatrix}, &
(p^1_{ij})_{i,j=0}^4 &= \begin{pmatrix}
0 & 1 & 0 & 0 &  0 \\
1 & 0 & 1 & 0 &  6 \\
0 & 1 & 3 & 1 &  3 \\
0 & 0 & 1 & 0 &  3 \\
0 & 6 & 3 & 3 & 12
\end{pmatrix}, \\
(p^2_{ij})_{i,j=0}^4 &= \begin{pmatrix}
0 & 0 & 1 & 0 &  0 \\
0 & \mathit{1} & 3 & 1 &  3 \\
1 & 3 & 1 & 0 &  3 \\
0 & 1 & 0 & 0 &  3 \\
0 & 3 & 3 & 3 & 15
\end{pmatrix}, &
(p^3_{ij})_{i,j=0}^4 &= \begin{pmatrix}
0 & 0 & 0 & 1 &  0 \\
0 & \mathit{0} & \mathit{2} & 0 &  6 \\
0 & \mathit{2} & 0 & 0 &  6 \\
1 & 0 & 0 & 3 &  0 \\
0 & 6 & 6 & 0 & 12
\end{pmatrix}, \\
(p^4_{ij})_{i,j=0}^4 &= \begin{pmatrix}
0 & 0 & 0 & 0 &  1 \\
0 & \mathit{2} & \mathit{1} & \mathit{1} &  4 \\
0 & \mathit{1} & 1 & 1 &  5 \\
0 & \mathit{1} & 1 & 0 &  2 \\
1 & 4 & 5 & 2 & 12
\end{pmatrix}.
\end{aligned}
\end{equation}
The graph $\G_1 = (X, R_1)$ is a quotient-polynomial graph on $45$ vertices
with parameter array $[[12, 4, 4, 24],$ $[6, 0, 3; 0, 1; 2]]$.
The association scheme $\A$ is imprimitive
with imprimitivity set $\tilde{0} = \{0, 3\}$.
This parameter set is also listed in~\cite{hm23} as a feasible parameter set
for an association scheme with noncyclotomic eigenvalues,
as the dual eigenmatrix of $\A$ has entries from a degree $6$ extension of $\Q$.
Therefore, we only list their first three decimal places:
$$
Q = \begin{pmatrix}
1 & 12     &  12    &  8 & 12     \\
1 &  3.829 &  2.086 & -1 & -5.915 \\
1 & -5.430 &  4.925 & -1 &  0.505 \\
1 & -3     & -3     &  8 & -3     \\
1 &  0.534 & -2.337 & -1 &  1.803
\end{pmatrix}.
$$
By the ordering of eigenspaces used in the above matrix,
the corresponding dual imprimitivity set is $\overline{0} = \{0, 3\}$,
and we also have $\overline{1} = \{1, 2, 4\}$.
We will consider embeddings
of subschemes induced on three $R_3$-cliques of $\A$
into the eigenspace $S_1$ of dimension $m_1 = 12$.
We note that $\overline{m}_{\overline{1}} = 4$
and therefore ${m_1 \over \overline{m}_{\overline{1}}} = 3$,
which may severely restrict
which of such subschemes admit an embedding into $S_1$.

Let $\alpha_i = {Q_{i1} \over m_1}$ ($0 \le i \le 4$)
be the inner product of the unit vectors in $S_1$
corresponding to two vertices in relation $R_i$.
We note that $\alpha_0 = 1$, $\alpha_3 = -{1 \over 4}$,
$\alpha_1 + \alpha_2 + 3 \alpha_4 = 0$
and $\alpha_2 = 8 \alpha_1^2 + 2 \alpha_1 - 1$.
Since the minimal polynomial of $\alpha_1$ is
$x^3 - {3 \over 16} x + {7 \over 256}$,
it follows that $\alpha_i \in \F$ ($0 \le i \le 4$),
where $\F$ is a degree $3$ extension of $\Q$.
We may therefore use the field $\F$
when computing coefficients of vectors in an orthonormal basis of $S_1$
using Algorithm~\ref{alg:unitvecs}.
This allows us to obtain the following result.

\begin{theorem}
\label{thm:p4-8-45-18}
An association scheme $\A$ with intersection numbers \eqref{eqn:p4-8-45-18}
does not exist.
\end{theorem}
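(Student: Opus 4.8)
The plan is to follow the strategy of Theorem~\ref{thm:p4-12-45-52}: first determine the possible induced subschemes on a few $R_3$-cliques, then embed them into $S_1$ by Algorithm~\ref{alg:unitvecs}, and finally show that no such embedding extends to a spherical representation of a full scheme on all $45$ vertices.

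Here $\tilde{0} = \{0, 3\}$ with $k_3 = 4$, so each equivalence class $X_\ell$ of $R_{\tilde{0}}$ is an $R_3$-clique of size $5$ carrying the one-class subscheme $K_5$, and there are $\tilde{n} = 9$ of them. The relations $R_1, R_2, R_4$ are merged in the quotient (one checks $\tilde{\I} = \{\{0,3\}, \{1,2,4\}\}$, so $\tilde{\A}$ is the complete graph $K_9$). Lemma~\ref{lem:hamming} does not apply, since no $p^j_{3j}$ equals $k_3 = 4$; instead I would establish the inter-clique structure directly. Counting with the intersection numbers shows that, for a fixed vertex $x$ and any other clique $X_{\ell'}$, the relation of $x$ to the representative of $X_{\ell'}$ is irrelevant and $x$ always has exactly one $R_1$-, one $R_2$-, and three $R_4$-neighbours in $X_{\ell'}$. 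Hence between any two cliques $R_1$ and $R_2$ each induce a perfect matching and $R_4$ the complementary $3$-regular bipartite graph. Since $\alpha_3 = -1/4$, each clique embeds into $S_1$ as a regular $4$-simplex spanning a $4$-dimensional subspace; as $m_1 = 12 = 3 \cdot 4$, three cliques in general position span all of $S_1$.

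Next I would enumerate the candidate relation schemes for $\A^{(3)}$ on three cliques ($15$ vertices). The freedom lies in how the $R_1$- and $R_2$-matchings for the three pairs $(X_1, X_2)$, $(X_1, X_3)$, $(X_2, X_3)$ align; triangle counts forced by the intersection numbers (for instance $p^1_{11} = 0$ and $p^3_{12} = 2$) constrain the admissible alignments to a finite list of non-isomorphic configurations, in the spirit of Figure~\ref{fig:a3}. For each candidate I would form the Gram matrix $C$ with entries $\alpha_i = Q_{i1}/m_1 \in \F$ and run Algorithm~\ref{alg:unitvecs} over the degree-$3$ field $\F = \Q(\alpha_1)$; a candidate is discarded if the algorithm fails, and otherwise it yields a matrix $U$ which I expect to have full column rank $12$, thereby fixing an orthonormal basis of $S_1$.

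Finally, for each surviving embedding I would try to complete it to all $45$ vertices. Because the $15$ unit vectors already span $S_1$, every vertex $y$ in one of the six remaining cliques has its inner products with the rows of $U$ confined to $\{\alpha_0, \dots, \alpha_4\}$, which determines the candidate vector $u'_y$ uniquely and forces it to be a unit vector; the admissible candidates are governed by the matching partners chosen within each spanned clique. I would collect those candidates that are genuine unit vectors and test whether a consistent set $Z$ with $|Z| = 30$ can be selected whose pairwise inner products realise relations of $\A$, so that $(X^{(3)} \cup Z, \R)$ has the prescribed parameters. The main obstacle is this last extension step: ruling out every completion across all surviving $\A^{(3)}$ is a sizeable combinatorial search, carried out with exact arithmetic in the pseudo-field $\F\sqrt{\F}$, which is essential here because the eigenvalues are noncyclotomic. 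If every branch fails, $\A$ does not exist.
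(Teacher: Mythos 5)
Your proposal is correct and follows the paper's proof essentially step for step: the pairwise perfect matchings in $R_1$ and $R_2$ between the $R_3$-cliques of size $5$, enumeration of the candidate relation schemes $\A^{(3)}$ on three cliques, embedding into $S_1$ over the degree-$3$ field $\F$ via Algorithm~\ref{alg:unitvecs} with full-column-rank $U$ fixing the basis, and a final search over the $(5 \cdot 4)^3$ single-vertex candidates --- which in the paper's computation already yields no unit vectors at all, so the consistency search over a $30$-vertex set $Z$ that you brace for never actually arises. The only (inessential) difference is that the paper anchors the triple of cliques on an $R_2$-triangle guaranteed by $p^2_{22} = 1$ and classifies $\G^{(3)}_1$ as $C_{15}$ or $C_9 + C_6$, restricting the enumeration to $1$-factors of a directed auxiliary graph that contain a triangle; your unanchored enumeration is logically sound but somewhat larger.
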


\begin{proof}
Since $p^1_{31} + 1 = p^2_{31} = p^4_{31} = 1$
and $p^1_{32} = p^2_{32} + 1 = p^4_{32} = 1$,
each vertex $x$ of $\A$ is in relations $R_1$ and $R_2$
with precisely one vertex in each of the $R_3$-cliques not containing $x$.
As $p^1_{11} = 0$ and $p^2_{22} = 1$,
the graph $\G_1$ does not contain triangles,
while the graph $\G_2$ does contain a triangle $x_1 x_2 x_3$.
Without loss of generality,
we may assume $x_\ell \in X_\ell$ ($1 \le \ell \le 3$).
We will consider the possibilities for the relation scheme $\A^{(3)}$
under this labelling of the $R_3$-cliques.

By the above argument,
the graphs $\G^{(3)}_1$ and $\G^{(3)}_2$
are unions of cycles of lengths divisible by $3$.
Since $\G^{(3)}_1$ has no triangles,
it must be isomorphic to either $C_{15}$ or $C_9 + C_6$.
On the other hand, by the above assumption,
the graph $\G^{(3)}_2$ does contain a triangle.
The choice of these two graphs
uniquely determines the relation scheme $\A^{(3)}$.

Given a choice of $\G^{(3)}_1$,
we define an asymmetric relation
$\vec{R}^{(3)}_{2,4} = \{(x, y) \in R^{(3)}_2 \cup R^{(3)}_4 \mid x \in X_i, \ y \in X_j, \ j - i \equiv 1 \pmod{3}\}$
and a directed graph
$\vec{\G}^{(3)}_{2,4} = (X^{(3)}, \vec{R}^{(3)}_{2,4})$.
The candidates for $\G^{(3)}_2$ are then precisely
the underlying graphs of $1$-factors of $\vec{\G}^{(3)}_{2,4}$
(i.e., spanning subgraphs with in- and out-degrees of all vertices equal to $1$).
In the cases when $\G^{(3)}_1$ is isomorphic to $C_{15}$ and $C_9 + C_6$,
we find $5704$ and $4736$ distinct (up to isomorphism)
$1$-factors of $\vec{\G}^{(3)}_{2,4}$, respectively,
of which $3637$ and $3028$, respectively, contain a triangle.
For each of these possibilities,
we thus build a candidate for the relation scheme $\A^{(3)}$
and attempt to compute the corresponding matrix $U$
with the coefficients of the unit vectors $u'_x \in S_1$ ($x \in X^{(3)}$)
using Algorithm~\ref{alg:unitvecs}.
We find all the coefficients can only be determined
in $55$ cases with $\G^{(3)}_1 \cong C_{15}$
and $45$ cases with $\G^{(3)}_1 \cong C_9 + C_6$.
In each of these cases,
the matrix $U$ has full column rank,
thus uniquely determining the orthornomal basis of $S_1$ being used.

For each of these $100$ cases,
we examine the $(5 \cdot 4)^3 = 8000$ candidates
for the remaining vertices $y$ of $\A$
(determined by the choice of vertices in relations $R_1$ and $R_2$ with $y$
in each of the $R_3$-cliques $X_1, X_2, X_3$)
and attempt to find the corresponding unit vectors $u'_y$.
However,
in none of the cases we find any such unit vectors,
from which it follows that the association scheme $\A$ does not exist.
\end{proof}

The
\href{https://nbviewer.org/github/jaanos/eigenspace-embeddings/blob/main/QPG4-8-45-18.ipynb}{\tt QPG4-8-45-18.ipynb}
notebook on the {\tt eigenspace-embeddings} repository~\cite{v25}
illustrates the computation needed to obtain the above result.

\subsection{QPG with parameter array
$[[6, 18, 2, 6, 12], [1, 0, 2, 0; 0, 0, 3; 0, 1; 2]]$}
\label{ssec:p5-6-45-22}

Let $\A$ be a $5$-class association scheme with intersection numbers
\begin{equation}
\label{eqn:p5-6-45-22}
\begin{aligned}
(p^0_{ij})_{i,j=0}^5 &= \begin{pmatrix}
1 & 0 &  0 & 0 & 0 &  0 \\
0 & \mathit{6} &  0 & 0 & 0 &  0 \\
0 & 0 & \mathit{18} & 0 & 0 &  0 \\
0 & 0 &  0 & \mathit{2} & 0 &  0 \\
0 & 0 &  0 & 0 & \mathit{6} &  0 \\
0 & 0 &  0 & 0 & 0 & \mathit{12}
\end{pmatrix}, &
(p^1_{ij})_{i,j=0}^5 &= \begin{pmatrix}
0 & 1 & 0 & 0 & 0 & 0 \\
1 & 0 & 3 & 0 & 2 & 0 \\
0 & 3 & 9 & 0 & 0 & 6 \\
0 & 0 & 0 & 0 & 0 & 2 \\
0 & 2 & 0 & 0 & 0 & 4 \\
0 & 0 & 6 & 2 & 4 & 0
\end{pmatrix}, \\
(p^2_{ij})_{i,j=0}^5 &= \begin{pmatrix}
0 & 0 & 1 & 0 & 0 & 0 \\
0 & \mathit{1} & 3 & 0 & 0 & 2 \\
1 & 3 & 0 & 2 & 6 & 6 \\
0 & 0 & 2 & 0 & 0 & 0 \\
0 & 0 & 6 & 0 & 0 & 0 \\
0 & 2 & 6 & 0 & 0 & 4
\end{pmatrix}, &
(p^3_{ij})_{i,j=0}^5 &= \begin{pmatrix}
0 & 0 &  0 & 1 & 0 & 0 \\
0 & \mathit{0} &  \mathit{0} & 0 & 0 & 6 \\
0 & \mathit{0} & 18 & 0 & 0 & 0 \\
1 & 0 &  0 & 1 & 0 & 0 \\
0 & 0 &  0 & 0 & 6 & 0 \\
0 & 6 &  0 & 0 & 0 & 6
\end{pmatrix}, \\
(p^4_{ij})_{i,j=0}^5 &= \begin{pmatrix}
0 & 0 &  0 & 0 & 1 & 0 \\
0 & \mathit{2} &  \mathit{0} & \mathit{0} & 0 & 4 \\
0 & \mathit{0} & 18 & 0 & 0 & 0 \\
0 & \mathit{0} &  0 & 0 & 2 & 0 \\
1 & 0 &  0 & 2 & 3 & 0 \\
0 & 4 &  0 & 0 & 0 & 8
\end{pmatrix}, &
(p^5_{ij})_{i,j=0}^5 &= \begin{pmatrix}
0 & 0 & 0 & 0 & 0 & 1 \\
0 & \mathit{0} & \mathit{3} & \mathit{1} & \mathit{2} & 0 \\
0 & \mathit{3} & 9 & 0 & 0 & 6 \\
0 & \mathit{1} & 0 & 0 & 0 & 1 \\
0 & \mathit{2} & 0 & 0 & 0 & 4 \\
1 & 0 & 6 & 1 & 4 & 0
\end{pmatrix}.
\end{aligned}
\end{equation}
The graphs $\G_1 = (X, R_1)$ and $\G_5 = (X, R_5)$
are quotient-polynomial graphs on $45$ vertices
with parameter arrays $[[6, 18, 2, 6, 12], [1, 0, 2, 0; 0, 0, 3; 0, 1; 2]]$
and $[[12, 18, 2, 6, 6],$ $[4, 6, 8, 0; 0, 0, 6; 0, 2; 4]]$,
respectively.
The association scheme $\A$ is imprimitive
with imprimitivity set $\tilde{0} = \{0, 3, 4\}$.
The dual eigenmatrix of $\A$ is
$$
Q = \begin{pmatrix}
1 & 15 & 2 & 10 & 15 & 2 \\
1 & \frac{5 \sqrt{6}}{2} & \frac{\sqrt{5} - 1}{2} & 0 & -\frac{5 \sqrt{6}}{2} & -\frac{\sqrt{5} + 1}{2} \\
1 & 0 & -\frac{\sqrt{5} + 1}{2} & 0 & 0 & \frac{\sqrt{5} - 1}{2} \\
1 & -\frac{15}{2} & 2 & 10 & -\frac{15}{2} & 2 \\
1 & 0 & 2 & -5 & 0 & 2 \\
1 & -\frac{5 \sqrt{6}}{4} & \frac{\sqrt{5} - 1}{2} & 0 & \frac{5 \sqrt{6}}{4} & -\frac{\sqrt{5} + 1}{2}
\end{pmatrix}.
$$
By the ordering of eigenspaces used in the above matrix,
the corresponding dual imprimitivity set is $\overline{0} = \{0, 2, 5\}$.
We also have $\tilde{1} = \{1, 5\}$, $\tilde{2} = \{2\}$,
and $\overline{1} = \{1, 4\}$, $\overline{3} = \{3\}$.
Let $\{X_\ell \mid 1 \le \ell \le 5\}$
be the set of the equivalence classes of $R_{\tilde{0}}$,
and note that the graphs $\G_4|_{X_\ell} = (X_\ell, R_4|_{X_\ell})$
($1 \le \ell \le 5$)
are isomorphic to the complete tripartite graph $K_{3 \times 3}$.
We will therefore call the sets $X_\ell$ ($1 \le \ell \le 5$)
the {\em $R_4$-tricliques} of $\A$.
Furthermore, as $\{0, 3\}$ is another imprimitivity set of $\A$,
we have $X_\ell = X_{\ell 1} \cup X_{\ell 2} \cup X_{\ell 3}$
($1 \le \ell \le 5$),
where the sets $X_{\ell r}$ ($1 \le \ell \le 5$, $1 \le r \le 3$)
are the $R_3$-cliques of $\A$.
We also note that the quotient scheme $\tilde{\A} = \A / \tilde{0}$
is the cyclic scheme $C_5$
-- i.e.,
the graphs $\tilde{\G}_{\tilde{1}} = (\tilde{X}, \tilde{R}_{\tilde{1}})$
and $\tilde{\G}_{\tilde{2}} = (\tilde{X}, \tilde{R}_{\tilde{2}})$
are both isomorphic to the graph $C_5$.

We will consider embeddings
of subschemes induced on three $R_4$-tricliques of $\A$
into the eigenspace $S_1$ of dimension $m_1 = 15$.
We note that $\overline{m}_{\overline{1}} = 6$
and therefore ${m_1 \over \overline{m}_{\overline{1}}} = {5 \over 2} < 3$,
which may severely restrict
which of such subschemes admit an embedding into $S_1$.
We obtain the following result.

\begin{theorem}
\label{thm:p5-6-45-22}
An association scheme $\A$ with intersection numbers \eqref{eqn:p5-6-45-22}
does not exist.
\end{theorem}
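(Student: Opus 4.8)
The plan is to follow the method of Section~\ref{sec:embeddings}, exploiting the rigidity of the spherical representation of $\A$ in $S_1$. First I would record the inner products $\alpha_i = Q_{i1}/m_1$ of the unit vectors $u_x$ attached to a pair of vertices in relation $R_i$. From the displayed $Q$ one reads off $\alpha_0 = 1$, $\alpha_2 = \alpha_4 = 0$, $\alpha_3 = -\tfrac12$, $\alpha_1 = \tfrac{\sqrt6}{6}$ and $\alpha_5 = -\tfrac{\sqrt6}{12}$, all lying in the quadratic field $\F = \Q(\sqrt6)$. These values already impose a clean geometric picture: the three vectors attached to an $R_3$-clique are pairwise at inner product $-\tfrac12$, so they form an equilateral triangle spanning a $2$-dimensional plane and summing to $0$; vectors from different $R_3$-cliques of the same $R_4$-triclique are orthogonal (as $\alpha_4 = 0$), so each triclique spans an orthogonal sum of three such planes, that is, a $6$-dimensional subspace of $S_1$ (consistent with $\overline m_{\overline 1} = 6$); and vectors from two tricliques at distance $2$ in the quotient $C_5$ are orthogonal (as $\alpha_2 = 0$), so the corresponding $6$-dimensional subspaces meet only in $0$.

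Next I would pin down the admissible combinatorics. The within-triclique structure is already fixed ($\G_4|_{X_\ell} \cong K_{3 \times 3}$ with the $R_3$-cliques as its parts). For two adjacent tricliques the relations between them are $R_1$ and $R_5$; using the intersection numbers one checks that each vertex has exactly three $R_1$-neighbours in each adjacent triclique, and since $p^3_{11} = 0$ no two of these can lie in a common $R_3$-clique, so they form a transversal of the three $R_3$-cliques. Hence the $R_1$-graph between two adjacent tricliques is a transversal (Latin-square-like) $3$-regular bipartite graph whose bipartite complement is the $R_5$-graph, while between distance-$2$ tricliques every pair lies in $R_2$ and the connection is rigid. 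I would then enumerate, up to isomorphism, the candidate $R_1$-graphs between adjacent tricliques and assemble the candidate relation schemes $\A^{(3)}$ on three consecutive tricliques $X_1, X_2, X_3$ of the $C_5$ (two free adjacent connections $X_1 X_2$, $X_2 X_3$ and the rigid distance-$2$ connection $X_1 X_3$).

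For each candidate $\A^{(3)}$ I would build the Gram matrix $C$ and run Algorithm~\ref{alg:unitvecs} over the field $\F = \Q(\sqrt6)$ to test for an embedding into $S_1$. The dimension count is the engine of the argument: $X_1$ and $X_3$ span orthogonal $6$-spaces, so together they fill $12$ of the $15$ dimensions of $S_1$, and the $6$-space spanned by $X_2$ must therefore meet the span of $\{u'_x \mid x \in X_1 \cup X_3\}$ in at least $18 - 15 = 3$ dimensions. Since the projections of the $X_2$-vectors onto the $X_1$- and $X_3$-subspaces are determined by the chosen $R_1$-graphs, this forced overlap translates into rigid linear relations that most candidates cannot satisfy; the algorithm fails on them, leaving only a short list of configurations whose vectors actually span $S_1$.

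Finally, for each surviving configuration I would attempt to extend the embedding to the remaining tricliques $X_4, X_5$. Once a spanning set for $S_1$ is in hand, every further vertex $y$ has its vector $u'_y$ determined---up to the combinatorial choice of its $R_1$-neighbours in each of $X_1, X_2, X_3$---by the inner products $\alpha_i$; I would enumerate these candidates, keep those $u'_y$ that are genuine unit vectors, and test whether a consistent set of $18$ further vertices completing $\A$ to all $45$ vertices can be selected. Showing that no such completion exists for any surviving $\A^{(3)}$ yields the claim. I expect the enumeration of the transversal $R_1$-graphs together with the extension step to be the main obstacle, handled computationally as in Theorems~\ref{thm:p4-12-45-52} and~\ref{thm:p4-8-45-18}; the conceptual crux is the orthogonality forced by $\alpha_2 = \alpha_4 = 0$, which is precisely what makes three tricliques over-determine the $15$-dimensional eigenspace.
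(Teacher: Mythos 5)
Your proposal is correct in substance and rests on the same engine as the paper's proof---enumerate the admissible relation schemes induced on three $R_4$-tricliques and test embeddability into the $15$-dimensional eigenspace $S_1$ with Algorithm~\ref{alg:unitvecs}---and your preliminary facts all check out: $\alpha_1 = \frac{\sqrt{6}}{6}$, $\alpha_3 = -\frac{1}{2}$, $\alpha_5 = -\frac{\sqrt{6}}{12}$, $\alpha_2 = \alpha_4 = 0$; each triclique spans three mutually orthogonal equilateral-triangle planes, hence a $6$-space; and your transversal condition via $p^3_{11}=0$, read from both sides of the bipartition, is exactly the paper's filter that the distance-$2$ graph of the cubic bipartite $R_1$-connection be $3$-colorable by the $R_3$-cliques. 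The genuine difference is the choice of the triple. You take three consecutive tricliques of the quotient $C_5$, i.e.\ a path in $\tilde{\G}_{\tilde{1}}$, so two of the three connections are free and only $X_1X_3$ is the rigid all-$R_2$ one; the paper instead takes a path of length $2$ in $\tilde{\G}_{\tilde{2}}$, with $(X_1,X_2)\in\tilde{R}_{\tilde{1}}$ and $(X_1,X_3),(X_2,X_3)\in\tilde{R}_{\tilde{2}}$, so that only the $X_1X_2$ connection is free and $X_3$ attaches in a unique way. That choice extracts far more from $\alpha_2=0$ than your forced $3$-dimensional overlap: the $6$-space of $X_3$ must be orthogonal to the spans of \emph{both} $X_1$ and $X_2$, so the entire $\A^{(2)}$ embedding is forced into $9$ dimensions. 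Concretely, the paper enumerates just $18$ candidate cubic bipartite graphs for the one free connection, finds that $7$ yield an embeddable $\A^{(2)}$, and then the unique extension to $\A^{(3)}$ fails in all $7$ cases---so the completion step to $X_4, X_5$ that you (reasonably) budget as the main obstacle is never needed, and the enumeration over \emph{pairs} of connection graphs together with their gluings that your route requires is avoided. Both routes are logically sound computational nonexistence proofs; yours is simply the heavier one, since with two free connections the candidate list is roughly quadratic in size and you cannot know in advance whether survivors would force the full extension stage, whereas the paper's configuration concentrates all the orthogonality pressure on a single free connection before any enumeration begins.
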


\begin{proof}
Let $X_1, X_2, X_3$ be $R_4$-tricliques of $\A$
such that $(X_1, X_2) \in \tilde{R}_{\tilde{1}}$
and $(X_1, X_3),$ $(X_2, X_3) \in \tilde{R}_{\tilde{2}}$
(i.e., they correspond to a path of length $2$ in $\tilde{\G}_{\tilde{2}}$).
Let us first consider the graph $\G^{(2)}_1$.
Since $p^1_{13} = 0$ and $p^1_{14} = 2$,
each vertex of $X_\ell$ is in relations $R_1$
with precisely one vertex from each $R_3$-clique $X_{\ell' r}$
($\{\ell, \ell'\} = \{1, 2\}$, $1 \le r \le 3$).
The graph $\G^{(2)}_1$ is therefore a cubic bipartite graph on $18$ vertices
with bipartition $X_1 + X_2$
such that its distance-$2$ graph admits a $3$-coloring
(i.e., each color class partitions $X_i$ ($i = 1, 2$) into $R_3$-cliques).

We use the {\tt geng} utility from the {\tt nauty} package~\cite{m90}
to generate bipartite cubic graphs on $18$ vertices
and then use SageMath~\cite{sage24}
to pick the ones whose distance-$2$ graphs have chromatic number (at most) $3$.
We find $18$ such graphs,
all of which have at most one connected component not isomorphic to $K_{3,3}$.
Therefore,
each of these graphs has a unique bipartition up to graph automorphism,
which we identify with $X_1 + X_2$.
Furthermore, for each such graph $\G$,
we find all $6$-colorings of the graph obtained
by adding the edges of the distance-$2$ graph of $\G$
to the complete bipartite graph with bipartition $X_1 + X_2$,
and find that in each case,
there is a unique such $6$-coloring up to graph automorphism.
By identifying the partition of $\G$ into these color classes
with the partition of $X^{(2)}$ into $R_3$-cliques,
we see that the choice of such a graph for $\G^{(2)}_1$
uniquely determines the relation scheme $\A^{(2)}$.

For each of these graphs,
we thus build a candidate for the relation scheme $\A^{(2)}$
and attempt to compute the corresponding matrix $U$
with the coefficients of the unit vectors $u'_x \in S_1$ ($x \in X^{(2)}$)
using Algorithm~\ref{alg:unitvecs}.
We find that an embedding into $S_1$ exists
for $7$ choices of the graph $\G^{(2)}_1$.
Since $\tilde{2} = \{2\}$,
the corresponding relation schemes can be uniquely extended
into candidates for the relation scheme $\A^{(3)}$.
However,
we find that in none of these cases all the coefficients can be determined,
and thus conclude that none of these relation schemes
admit an embedding into $S_1$.
Since we have considered all the possibilities
for the induced subscheme $\A^{(3)}$,
it follows that the association scheme $\A$ does not exist.
\end{proof}

The
\href{https://nbviewer.org/github/jaanos/eigenspace-embeddings/blob/main/QPG5-6-45-22.ipynb}{\tt QPG5-6-45-22.ipynb}
notebook on the {\tt eigenspace-embeddings} repository~\cite{v25}
illustrates the computation needed to obtain the above result.

\section{Uniqueness results}
\label{sec:uniq}

Applying the same technique,
we obtain two more uniqueness results
for parameter sets of $5$-class association schemes.

\subsection{QPG with parameter array
$[[12, 2, 1, 12, 12], [6, 0, 4, 1; 0, 0, 1; 0, 1; 4]]$}
\label{ssec:p5-12-40-2}

Let $\A$ be a $5$-class association scheme with intersection numbers
\begin{equation}
\label{eqn:p5-12-40-2}
\begin{aligned}
(p^0_{ij})_{i,j=0}^5 &= \begin{pmatrix}
1 &  0 & 0 & 0 &  0 &  0 \\
0 & \mathit{12} & 0 & 0 &  0 &  0 \\
0 &  0 & \mathit{2} & 0 &  0 &  0 \\
0 &  0 & 0 & \mathit{1} &  0 &  0 \\
0 &  0 & 0 & 0 & \mathit{12} &  0 \\
0 &  0 & 0 & 0 &  0 & \mathit{12}
\end{pmatrix}, &
(p^1_{ij})_{i,j=0}^5 &= \begin{pmatrix}
0 & 1 & 0 & 0 & 0 & 0 \\
1 & 5 & 1 & 0 & 4 & 1 \\
0 & 1 & 0 & 0 & 0 & 1 \\
0 & 0 & 0 & 0 & 0 & 1 \\
0 & 4 & 0 & 0 & 4 & 4 \\
0 & 1 & 1 & 1 & 4 & 5
\end{pmatrix}, \\
(p^2_{ij})_{i,j=0}^5 &= \begin{pmatrix}
0 & 0 & 1 & 0 &  0 & 0 \\
0 & \mathit{6} & 0 & 0 &  0 & 6 \\
1 & 0 & 0 & 1 &  0 & 0 \\
0 & 0 & 1 & 0 &  0 & 0 \\
0 & 0 & 0 & 0 & 12 & 0 \\
0 & 0 & 0 & 0 &  0 & 6
\end{pmatrix}, &
(p^3_{ij})_{i,j=0}^5 &= \begin{pmatrix}
0 &  0 & 0 & 1 &  0 &  0 \\
0 &  \mathit{0} & \mathit{0} & 0 &  0 & 12 \\
0 &  \mathit{0} & 2 & 0 &  0 &  0 \\
1 &  0 & 0 & 0 &  0 &  0 \\
0 &  0 & 0 & 0 & 12 &  0 \\
0 & 12 & 0 & 0 &  0 &  0
\end{pmatrix}, \\
(p^4_{ij})_{i,j=0}^5 &= \begin{pmatrix}
0 & 0 & 0 & 0 & 1 & 0 \\
0 & \mathit{4} & \mathit{0} & \mathit{0} & 4 & 4 \\
0 & \mathit{0} & 0 & 0 & 2 & 0 \\
0 & \mathit{0} & 0 & 0 & 1 & 0 \\
1 & 4 & 2 & 1 & 0 & 4 \\
0 & 4 & 0 & 0 & 4 & 4
\end{pmatrix}, &
(p^5_{ij})_{i,j=0}^5 &= \begin{pmatrix}
0 & 0 & 0 & 0 & 0 & 1 \\
0 & \mathit{1} & \mathit{1} & \mathit{1} & \mathit{4} & 5 \\
0 & \mathit{1} & 0 & 0 & 0 & 1 \\
0 & \mathit{1} & 0 & 0 & 0 & 0 \\
0 & \mathit{4} & 0 & 0 & 4 & 4 \\
1 & 5 & 1 & 0 & 4 & 1
\end{pmatrix}.
\end{aligned}
\end{equation}
The graph $\G_1 = (X, R_1)$ is a quotient-polynomial graph on $40$ vertices
with parameter array $[[12, 2, 1, 12, 12],$ $[6, 0, 4, 1; 0, 0, 1; 0, 1; 4]]$.
The association scheme $\A$ is imprimitive
with imprimitivity set $\tilde{0} = \{0, 2, 3\}$.
The dual eigenmatrix of $\A$ is
$$
Q = \begin{pmatrix}
1 & 5 & 4 & 10 & 15 & 5 \\
1 & \frac{5}{2} & \frac{2}{3} & 0 & -\frac{5}{2} & -\frac{5}{3} \\
1 & 0 & 4 & -10 & 0 & 5 \\
1 & -5 & 4 & 10 & -15 & 5 \\
1 & 0 & -\frac{8}{3} & 0 & 0 & \frac{5}{3} \\
1 & -\frac{5}{2} & \frac{2}{3} & 0 & \frac{5}{2} & -\frac{5}{3}
\end{pmatrix}.
$$
By the ordering of eigenspaces used in the above matrix,
the corresponding dual imprimitivity set is $\overline{0} = \{0, 2, 5\}$.
We also have $\tilde{1} = \{1, 5\}$, $\tilde{4} = \{4\}$,
and $\overline{1} = \{1, 4\}$, $\overline{3} = \{3\}$.
Let $\{X_\ell \mid 1 \le \ell \le 10\}$
be the set of the equivalence classes of $R_{\tilde{0}}$,
and note that the graphs $\G_2|_{X_\ell} = (X_\ell, R_2|_{X_\ell})$
($1 \le \ell \le 10$)
are isomorphic to the complete bipartite graph $K_{2,2}$.
We will therefore call the sets $X_\ell$ ($1 \le \ell \le 10$)
the {\em $R_2$-bicliques} of $\A$.
We also note that the quotient scheme $\tilde{\A} = \A / \tilde{0}$
is the Johnson scheme $J(5, 2)$
-- i.e.,
the graph $\tilde{\G}_{\tilde{1}} = (\tilde{X}, \tilde{R}_{\tilde{1}})$
is isomorphic to the triangular graph $T(5)$,
and the graph $\tilde{\G}_{\tilde{4}} = (\tilde{X}, \tilde{R}_{\tilde{4}})$
is isomorphic to the Petersen graph.

We will consider embeddings
of subschemes induced on three $R_2$-bicliques of $\A$
into the eigenspace $S_1$ of dimension $m_1 = 5$.
We note that $\overline{m}_{\overline{1}} = 2$
and therefore ${m_1 \over \overline{m}_{\overline{1}}} = {5 \over 2} < 3$,
which may severely restrict
which of such subschemes admit an embedding into $S_1$.
We obtain the following result.

\begin{theorem}
\label{thm:p5-12-40-2}
There is, up to isomorphism,
precisely one association scheme $\A$
with intersection numbers \eqref{eqn:p5-12-40-2}.
\end{theorem}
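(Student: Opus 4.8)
The plan is to follow the embedding strategy of the previous theorems, but now to push it through to a full reconstruction rather than to a contradiction: I will realize every scheme with parameters~\eqref{eqn:p5-12-40-2} inside the $5$-dimensional eigenspace $S_1$, show that the resulting configuration of $40$ unit vectors is rigid, and thereby identify $\A$ with the orbital scheme of $W(B_5)$ on the $40$ roots $\pm e_i \pm e_j$ ($1 \le i < j \le 5$) of $D_5$. A short computation of the inner products $\alpha_i = Q_{i1}/m_1$ gives $\alpha_0 = 1$, $\alpha_1 = \tfrac{1}{2}$, $\alpha_2 = 0$, $\alpha_3 = -1$, $\alpha_4 = 0$, $\alpha_5 = -\tfrac{1}{2}$. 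Since $\alpha_3 = -1$ and $k_3 = 1$, each $R_3$-clique is an antipodal pair $\{u,-u\}$; since $\alpha_2 = 0$ and $\G_2|_{X_\ell} \cong K_{2,2}$, the two $R_3$-axes inside each $R_2$-biclique are orthogonal. Hence every biclique embeds into $S_1$ as four vectors $\pm a, \pm b$ with $a \perp b$, and, passing to the $20$ antipodal lines, becomes an orthogonal pair of lines indexed by a $2$-subset of $\{1,\dots,5\}$ under $\tilde{\A} = J(5,2)$.

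Next I would fix the inter-biclique geometry from the intersection numbers. For bicliques whose $2$-subsets share an element (so they lie in $\tilde{R}_{\tilde{1}}$), the count $k_1 = k_5 = 12$ spread over the six such bicliques forces exactly one $R_1$- and one $R_5$-neighbour on each $R_3$-axis, i.e.\ line-angle $60^\circ$; for bicliques whose $2$-subsets are disjoint (so they lie in $\tilde{R}_{\tilde{4}}$), the count $k_4 = 12$ over the three such bicliques forces a vertex to be $R_4$-related to all four of their vertices, i.e.\ line-angle $90^\circ$. I would then enumerate, by computer, the finitely many candidate induced schemes $\A^{(3)}$ on three bicliques — organised by the intersection pattern of the three $2$-subsets and by the matchings of $R_1/R_5$-neighbours between them — and feed each to Algorithm~\ref{alg:unitvecs}. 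Taking a triple whose $2$-subsets cover $\{1,\dots,5\}$, the surviving embeddings have full column rank, so the orthonormal frame of $S_1$ is pinned down.

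With such a spanning $Y$ in hand, I would reconstruct the remaining $28$ vertices one at a time, exactly as in the nonexistence arguments: the admissible relations of a candidate vertex $y$ to the fixed bicliques prescribe all inner products $\langle u_y, u_x\rangle$ ($x \in Y$), and since $Y$ spans $S_1$ this determines $u_y$ uniquely; the unit-norm test and the pairwise inner-product tests among the new vectors then either confirm $y$ or discard it. The assertion to be verified is that exactly one completion to $40$ vectors survives, namely the normalised $D_5$ roots, with the biclique partition equal to the canonical one attached to $\tilde{0} = \{0,2,3\}$. Finally I would check directly that the six relations defined on these $40$ vectors by $\alpha$-value together with common-$2$-subset incidence form an association scheme with intersection numbers~\eqref{eqn:p5-12-40-2}; as these are the suborbits of $W(B_5)$, of sizes $1,1,2,12,12,12 = k_0,\dots,k_5$, existence is immediate, and uniqueness follows from the rigidity of the reconstruction.

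The main obstacle is exactly that $\alpha_2 = \alpha_4 = 0$, so the representation in $S_1$ is blind to the difference between $R_2$ and $R_4$: the angles alone cannot separate two roots sharing a $2$-subset from two roots in disjoint $2$-subsets. The heart of the proof is therefore to show that the decomposition into orthogonal line-pairs — the biclique partition — is itself geometrically forced, so that the $R_2/R_4$ split is recovered from the canonical imprimitivity structure rather than from the visible angles; equivalently, that among all vector configurations consistent with the four $\alpha$-values, precisely one also realises ten orthogonal line-pairs indexed by $J(5,2)$. Carrying out the enumeration described above and confirming that this completion both exists and is unique is where the real work lies, and it is this step that upgrades the method from nonexistence to a uniqueness statement.
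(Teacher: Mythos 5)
Your proposal is correct and follows essentially the same route as the paper: the same restriction to three $R_2$-bicliques forming a path of length $2$ in the quotient scheme $J(5,2)$, the same embedding of the candidate subschemes $\A^{(3)}$ into the $5$-dimensional eigenspace $S_1$ via Algorithm~\ref{alg:unitvecs}, the same full-column-rank rigidity, and the same candidate-by-candidate completion to $40$ unit vectors -- and your target model, the normalised $D_5$ roots $\pm e_i \pm e_j$ with automorphism group $W(B_5) \cong Z_2 \times (Z_2^4 \rtimes S_5)$, is precisely the spherical representation recorded in Remark~\ref{rem:p5-12-40-2}. The one point of divergence is the step you rightly single out as critical ($Q_{21} = Q_{41} = 0$ makes $S_1$ blind to the $R_2$/$R_4$ split), where the paper uses a slightly more concrete device than your ``forced line-pair partition'': since the intersection numbers $p^i_{11}$ ($0 \le i \le 5$) are pairwise distinct, all six relations are recovered from common-neighbour counts in the graph defined by inner product $\tfrac{1}{2}$, which accomplishes in one stroke the verification your plan defers to the enumeration.
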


\begin{proof}
Let $X_1, X_2, X_3$ be $R_2$-bicliques of $\A$
such that $(X_1, X_2) \in \tilde{R}_{\tilde{1}}$
and $(X_1, X_3), (X_2, X_3)$ $\in \tilde{R}_{\tilde{4}}$
(i.e., they correspond to a path of length $2$ in $\tilde{\G}_{\tilde{4}}$).
Let us first consider the graph $\G^{(2)}_1$.
Since $p^1_{12} = 1$ and $p^1_{13} = 0$,
and then $1 + p^1_{12} + p^1_{13} = p^5_{12} = p^5_{13} = 2$,
the graph $\G^{(2)}_1$ is a union of cycles of lengths divisible by $4$,
with the two neighbours of each vertex being in relation $R_2$.
In particular,
$\G^{(2)}_1$ must be isomorphic to $C_8$ or $2C_4$,
and its choice uniquely determines the relation scheme $\A^{(2)}$.
Since $\tilde{4} = \{4\}$,
such a relation scheme can be uniquely extended
into a candidate for the relation scheme $\A^{(3)}$.

We thus build two candidates for the relation scheme $\A^{(3)}$
and attempt to compute the corresponding matrix $U$
with the coefficients of the unit vectors $u'_x \in S_1$ ($x \in X^{(3)}$)
using Algorithm~\ref{alg:unitvecs}.
We find that an embedding into $S_1$ only exists
in the case when $\G^{(2)}_1 \cong 2C_4$.
The corresponding matrix $U$ has full column rank,
thus uniquely determining the orthonormal basis of $S_1$ being used.

By the above argument,
each vertex $y \in X \setminus X^{(3)}$ may be in relation $R_4$
with all vertices in at most one of $X_1, X_2, X_3$,
and is in relations $R_1$ and $R_5$ with a pair of vertices in relation $R_2$
from each of the remainder of these three $R_2$-bicliques.
Since there are $4$ such pairs in each of $X_\ell$ ($1 \le \ell \le 3$),
we examine the $3 \cdot 4^2 + 4^3 = 112$ candidates for such vertices $y$
and attempt to find the corresponding unit vectors $u'_y$.
Among them, we find $28$ vertices $y$ such that $u'_y$ is a unit vector,
which precisely matches the size of the set $X \setminus X^{(3)}$.
Let $Y$ be the set of all such vertices.
We define a graph $(Y, R')$,
where for a pair of vertices $y, z \in Y$
we have $(y, z) \in R'$ if and only if
$\langle u'_y, u'_z \rangle = {Q_{i1} \over m_1}$
for some $i$ ($1 \le i \le 5$).
We find that this graph is complete,
so we may attempt to construct $\A$
by identifying $Y$ with $X \setminus X^{(3)}$.

Note that since $Q_{21} = Q_{41} = 0$,
we cannot determine the relations of $\A$
from the inner products between vectors $u'_x$ ($x \in X$) alone.
We do however notice that the intersection numbers $p^i_{11}$ ($0 \le i \le 5$)
are all distinct,
so we may build a graph $\G = (X, R)$,
where $(x, y) \in R$ if and only if
$\langle u'_x, u'_y \rangle = {Q_{11} \over m_1} = {1 \over 2}$ ($x, y \in X$),
and then determine the relations $R_i$ ($0 \le i \le 5$)
so that $(x, y) \in R_i$ precisely when
$x$ and $y$ have exactly $p^i_{11}$ common neighbours in $\G$.
We verify that the obtained relation scheme
is an association scheme with the same parameters as $\A$,
and therefore conclude that there is,
up to isomorphism,
precisely one such association scheme.
\end{proof}

The
\href{https://nbviewer.org/github/jaanos/eigenspace-embeddings/blob/main/QPG5-12-40-2.ipynb}{\tt QPG5-12-40-2.ipynb}
notebook on the {\tt eigenspace-embeddings} repository~\cite{v25}
illustrates the computation needed to obtain the above result.

\begin{remark}
\label{rem:p5-12-40-2}
An alternative spherical representation of the association scheme $\A$
is given by the vectors
$u_{ij}^{\alpha \beta} = {\sqrt{2} \over 2} (\alpha e_i + \beta e_j)$
($1 \le i < j \le 5$, $\alpha, \beta \in \{-1, 1\}$),
where $\{e_i \mid 1 \le i \le 5\}$ is an orthonormal basis of $S_1$.
The inner product of two vectors
corresponding to a pair of vertices of $\A$ in relation $R_i$
equals ${Q_{i1} \over m_1}$ ($0 \le i \le 5$);
furthermore,
the vectors corresponding to a pair of vertices in relations $R_2$ and $R_4$
are $u_{ij}^{\alpha \beta}$ and $u_{i'j'}^{\alpha' \beta'}$
for some $i, i', j, j', \alpha, \alpha', \beta, \beta'$
with the sets $\{i, j\}$ and $\{i', j'\}$
being equal (with $\alpha \alpha' \beta \beta' = -1$) or disjoint,
respectively.

The graph $\G_1$ is a $6$-regular arc-transitive graph
of diameter $3$ and girth $3$.
Its group of automorphisms has order $3840$;
using GAP~\cite{g24},
we find that its structure can be described as $Z_2 \times (Z_2^4 \rtimes S_5)$
-- this also holds for the group of automorphisms of $\A$.
Its natural action on $X^2$ preserves the partition $\R$
-- i.e.,
each relation of $\A$ corresponds to an orbit of the action.
\end{remark}

\subsection{QPG with parameter array
$[[6, 4, 4, 12, 18], [3, 0, 0, 1; 0, 1, 0; 2, 0; 2]]$}
\label{ssec:p5-6-45-5}

Let $\A$ be a $5$-class association scheme with intersection numbers
\begin{equation}
\label{eqn:p5-6-45-5}
\begin{aligned}
(p^0_{ij})_{i,j=0}^5 &= \begin{pmatrix}
1 & 0 & 0 & 0 &  0 &  0 \\
0 & \mathit{6} & 0 & 0 &  0 &  0 \\
0 & 0 & \mathit{4} & 0 &  0 &  0 \\
0 & 0 & 0 & \mathit{4} &  0 &  0 \\
0 & 0 & 0 & 0 & \mathit{12} &  0 \\
0 & 0 & 0 & 0 &  0 & \mathit{18}
\end{pmatrix}, &
(p^1_{ij})_{i,j=0}^5 &= \begin{pmatrix}
0 & 1 & 0 & 0 & 0 & 0 \\
1 & 0 & 2 & 0 & 0 & 3 \\
0 & 2 & 0 & 0 & 2 & 0 \\
0 & 0 & 0 & 0 & 4 & 0 \\
0 & 0 & 2 & 4 & 0 & 6 \\
0 & 3 & 0 & 0 & 6 & 9
\end{pmatrix}, \\
(p^2_{ij})_{i,j=0}^5 &= \begin{pmatrix}
0 & 0 & 1 & 0 & 0 &  0 \\
0 & \mathit{3} & 0 & 0 & 3 &  0 \\
1 & 0 & 1 & 2 & 0 &  0 \\
0 & 0 & 2 & 2 & 0 &  0 \\
0 & 3 & 0 & 0 & 9 &  0 \\
0 & 0 & 0 & 0 & 0 & 18
\end{pmatrix}, &
(p^3_{ij})_{i,j=0}^5 &= \begin{pmatrix}
0 & 0 & 0 & 1 & 0 &  0 \\
0 & \mathit{0} & \mathit{0} & 0 & 6 &  0 \\
0 & \mathit{0} & 2 & 2 & 0 &  0 \\
1 & 0 & 2 & 1 & 0 &  0 \\
0 & 6 & 0 & 0 & 6 &  0 \\
0 & 0 & 0 & 0 & 0 & 18
\end{pmatrix}, \\
(p^4_{ij})_{i,j=0}^5 &= \begin{pmatrix}
0 & 0 & 0 & 0 & 1 & 0 \\
0 & \mathit{0} & \mathit{1} & \mathit{2} & 0 & 3 \\
0 & \mathit{1} & 0 & 0 & 3 & 0 \\
0 & \mathit{2} & 0 & 0 & 2 & 0 \\
1 & 0 & 3 & 2 & 0 & 6 \\
0 & 3 & 0 & 0 & 6 & 9
\end{pmatrix}, &
(p^5_{ij})_{i,j=0}^5 &= \begin{pmatrix}
0 & 0 & 0 & 0 & 0 & 1 \\
0 & \mathit{1} & \mathit{0} & \mathit{0} & \mathit{2} & 3 \\
0 & \mathit{0} & 0 & 0 & 0 & 4 \\
0 & \mathit{0} & 0 & 0 & 0 & 4 \\
0 & \mathit{2} & 0 & 0 & 4 & 6 \\
1 & 3 & 4 & 4 & 6 & 0
\end{pmatrix}.
\end{aligned}
\end{equation}
The graphs $\G_1 = (X, R_1)$ and $\G_4 = (X, R_4)$
are quotient-polynomial graphs on $45$ vertices
with parameter arrays $[[6, 4, 4, 12, 18], [3, 0, 0, 1; 0, 1, 0; 2, 0; 2]]$
and $[[12, 4, 4, 6, 18],$ $[9, 6, 0, 4; 0, 2, 0; 4, 0; 2]]$,
respectively.
The association scheme $\A$ is imprimitive
with imprimitivity set $\tilde{0} = \{0, 2, 3\}$.
The dual eigenmatrix of $\A$ is
$$
Q = \begin{pmatrix}
1 & 10 & 2 & 20 & 10 & 2 \\
1 & 5 & \frac{\sqrt{5} - 1}{2} & 0 & -5 & -\frac{\sqrt{5} + 1}{2} \\
1 & \frac{5}{2} & 2 & -10 & \frac{5}{2} & 2 \\
1 & -5 & 2 & 5 & -5 & 2 \\
1 & -\frac{5}{2} & \frac{\sqrt{5} - 1}{2} & 0 & \frac{5}{2} & -\frac{\sqrt{5} + 1}{2} \\
1 & 0 & -\frac{\sqrt{5} + 1}{2} & 0 & 0 & \frac{\sqrt{5} - 1}{2}
\end{pmatrix}.
$$
By the ordering of eigenspaces used in the above matrix,
the corresponding dual imprimitivity set is $\overline{0} = \{0, 2, 5\}$.
We also have $\tilde{1} = \{1, 4\}$, $\tilde{5} = \{5\}$,
and $\overline{1} = \{1, 4\}$, $\overline{3} = \{3\}$.
Let $\{X_\ell \mid 1 \le \ell \le 5\}$
be the set of the equivalence classes of $R_{\tilde{0}}$,
and note that the graphs $\G_2|_{X_\ell} = (X_\ell, R_2|_{X_\ell})$
($1 \le \ell \le 5$)
are isomorphic to the graph $K_3 \square K_3$,
or, equivalently, the Hamming graph $H(2, 3)$.
We have already encountered this situation in Subsection~\ref{ssec:p4-8-45-18},
and we will reuse the same terminology.
Furthermore,
we also note that the quotient scheme $\tilde{\A} = \A / \tilde{0}$
is the cyclic scheme $C_5$
-- i.e.,
the graphs $\tilde{\G}_{\tilde{1}} = (\tilde{X}, \tilde{R}_{\tilde{1}})$
and $\tilde{\G}_{\tilde{5}} = (\tilde{X}, \tilde{R}_{\tilde{5}})$
are both isomorphic to the graph $C_5$.

We will consider embeddings
of subschemes induced on three $(R_2 \cup R_3)$-cliques of $\A$
into the eigenspace $S_1$ of dimension $m_1 = 10$.
We note that $\overline{m}_{\overline{1}} = 4$
and therefore ${m_1 \over \overline{m}_{\overline{1}}} = {5 \over 2} < 3$,
which may severely restrict
which of such subschemes admit an embedding into $S_1$.
We obtain the following result.

\begin{theorem}
\label{thm:p5-6-45-5}
There is, up to isomorphism,
precisely one association scheme $\A$
with intersection numbers \eqref{eqn:p5-6-45-5}.
\end{theorem}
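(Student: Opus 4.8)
The plan is to run the eigenspace embedding method inside the eigenspace $S_1$ of dimension $m_1=10$, in the spirit of Theorem~\ref{thm:p5-12-40-2}. The column of $Q$ indexed by $1$ is $(10,5,\tfrac{5}{2},-5,-\tfrac{5}{2},0)^{\top}$, so the inner products $Q_{i1}/m_1$ attached to $R_0,\dots,R_5$ are $1,\tfrac12,\tfrac14,-\tfrac12,-\tfrac14,0$; these are rational, so I would take $\F=\Q$ throughout, and they are pairwise distinct, so within $S_1$ the relation between two vertices is recovered directly from the inner product of their unit vectors. Since $p^{1}_{21}=2$ and $p^{1}_{31}=0$, Lemma~\ref{lem:hamming} (with $i=2$, $j=1$, $d=2$, $e=3$) applies: for $(R_2\cup R_3)$-cliques $X_\ell,X_{\ell'}$ with $(X_\ell,X_{\ell'})\in\tilde{R}_{\tilde{1}}$ the graph $\G_1|_{X_\ell\cup X_{\ell'}}$ is isomorphic to $3K_{3,3}$, its components being lines of spreads of $\G_2|_{X_\ell}$ and $\G_2|_{X_{\ell'}}$ with the remaining cross-pairs in $R_4$; whereas if $(X_\ell,X_{\ell'})\in\tilde{R}_{\tilde{5}}$, then all cross-pairs lie in $R_5$ (since $\tilde{5}=\{5\}$), so the unit vectors of $X_\ell$ and $X_{\ell'}$ are mutually orthogonal in $S_1$. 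Moreover, each clique spans a subspace of $S_1$ of dimension $\overline{m}_{\overline{1}}=4$, namely the restriction of $S_1$ to the induced subscheme.

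Mirroring Subsection~\ref{ssec:p5-6-45-22}, I would fix three cliques $X_1,X_2,X_3$ with $(X_1,X_2)\in\tilde{R}_{\tilde{1}}$ and $(X_1,X_3),(X_2,X_3)\in\tilde{R}_{\tilde{5}}$, so that $X_1$ and $X_2$ are joined by a $3K_{3,3}$ while $X_3$ is orthogonal to $X_1\cup X_2$. As $H(2,3)$ has exactly two spreads, the candidates for $\A^{(2)}=\A|_{X_1\cup X_2}$---a choice of spread in each of $X_1,X_2$ together with a matching of their three lines---are few and may be listed up to isomorphism; since $\tilde{5}=\{5\}$, each extends uniquely to a candidate for $\A^{(3)}$ upon adjoining $X_3$ through $R_5$. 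For each candidate I would run Algorithm~\ref{alg:unitvecs} to test for an embedding into $S_1$. Because $X_3$ is orthogonal to $X_1\cup X_2$ and spans four dimensions, such an embedding can exist only if the span of $X_1\cup X_2$ has dimension at most $10-4=6$; with $m_1/\overline{m}_{\overline{1}}=\tfrac52<3$ this is highly restrictive. I expect exactly one candidate to survive up to isomorphism, with the span of $X_1\cup X_2$ equal to $6$, so that adjoining $X_3$ makes the matrix $U$ attain full column rank $10$ and fixes the orthonormal basis of $S_1$.

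With the basis of $S_1$ fixed, I would reconstruct the whole scheme as in Theorem~\ref{thm:p5-12-40-2}. The cliques $X_1,X_2,X_3$ occupy three vertices of the pentagon $\tilde{\G}_{\tilde{1}}$ and the remaining two cliques the other two; reading off the pentagon distances shows that each of the $45-27=18$ missing vertices $y$ is in relation $R_5$ with exactly one clique of $X^{(3)}$ and is joined to each of the other two by a $3K_{3,3}$. Specifying, for each of those two cliques, the line to which $y$ is $R_1$-related fixes the inner products of $u'_y$ with all of $X^{(3)}$, and since $X^{(3)}$ spans $S_1$ this determines $u'_y$ uniquely. I would enumerate these candidates, keep those $y$ for which $u'_y$ is a unit vector, and verify that they are exactly the $18$ expected vertices, all of whose pairwise inner products lie in $\{1,\tfrac12,\tfrac14,-\tfrac12,-\tfrac14,0\}$. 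Reading off the relations from these (pairwise distinct) values and checking that the resulting $(X,\R)$ is an association scheme with intersection numbers~\eqref{eqn:p5-6-45-5} then yields both existence and uniqueness up to isomorphism.

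The main obstacle is the embedding filter of the second paragraph: one must show that the rigidity of Lemma~\ref{lem:hamming}, together with the sharp ratio $m_1/\overline{m}_{\overline{1}}=\tfrac52$ and the orthogonality of $X_3$, forces the $X_1$--$X_2$ link into essentially a single configuration whose span has dimension exactly $6$. The reconstruction then involves no further choices; in particular, because the representation in $S_1$ is faithful ($1\notin\overline{0}$), the two spreads governing the two $3K_{3,3}$ links at each remaining clique must be distinct, as otherwise three vertices on a common line would share all inner products with the spanning set $X^{(3)}$ and hence coincide. Unlike in Theorem~\ref{thm:p5-12-40-2}, no additional bookkeeping is required to recover the relations, since the six values $Q_{i1}/m_1$ are pairwise distinct.
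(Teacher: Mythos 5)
Your setup matches the paper's proof step for step: the same triple of $(R_2 \cup R_3)$-cliques corresponding to a path of length $2$ in $\tilde{\G}_{\tilde{5}}$, the same application of Lemma~\ref{lem:hamming} (via $p^1_{12}=2$, $p^1_{13}=0$) forcing $\G^{(2)}_1 \cong 3K_{3,3}$ with components given by spreads, the unique extension of $\A^{(2)}$ to $\A^{(3)}$ because $\tilde{5}=\{5\}$ (with all cross-pairs to $X_3$ in $R_5$, hence orthogonal vectors since $Q_{51}=0$), the full-column-rank embedding into $S_1$ fixing the orthonormal basis, and the enumeration of the remaining vertices by choosing the $R_5$-clique (which the pentagon structure restricts to one of $X_1, X_2$, never $X_3$) and one line in each of the other two cliques, i.e.\ the paper's $2 \cdot 6^2 = 72$ candidates. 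Your observations that $\F = \Q$ suffices (column $1$ of $Q$ is rational even though $Q$ itself is not) and that the six values $Q_{i1}/m_1 = 1, \tfrac12, \tfrac14, -\tfrac12, -\tfrac14, 0$ are pairwise distinct, so relations can be read off directly from inner products, also agree with how the paper finishes, in contrast to Theorem~\ref{thm:p5-12-40-2}, where $Q_{21}=Q_{41}=0$ forces extra bookkeeping.

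The gap is at the terminal verification. You predict that the unit-vector filter leaves ``exactly the $18$ expected vertices'' and that ``the reconstruction then involves no further choices.'' The paper's computation yields $36$ survivors --- precisely \emph{twice} $|X \setminus X^{(3)}|$ --- so your asserted check would fail and your argument stalls there with no contingency. The missing idea is a selection step: the paper forms the auxiliary graph on the $36$ survivors with edges given by inner products $Q_{i1}/m_1$ for $i \in \{2, 3, 5\}$ (the only relations possible inside $X_4 \cup X_5$, since $(X_4, X_5) \in \tilde{R}_{\tilde{5}}$), finds that it splits into two disjoint $18$-cliques $Y_1$ and $Y_2$, builds a relation scheme from each by identifying it with $X \setminus X^{(3)}$, verifies that \emph{both} are association schemes with intersection numbers \eqref{eqn:p5-6-45-5}, and --- essentially for the uniqueness claim --- checks that the two resulting schemes are isomorphic. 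Without that final isomorphism check, finding more admissible vertices than needed leaves open the possibility of two non-isomorphic completions, and uniqueness would not follow. (Two minor further points: the paper obtains $\A^{(2)}$ as uniquely determined outright, since all spread-and-matching choices are isomorphic, so no embedding filter over $\A^{(2)}$-candidates is needed; and your a priori dimension bound $\dim \le 10 - 4 = 6$ for the span of $X_1 \cup X_2$ is plausible but in the paper is simply replaced by the direct rank computation of $U$.)
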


\begin{proof}
Let $X_1, X_2, X_3$ be $(R_2 \cup R_3)$-cliques of $\A$
such that $(X_1, X_2) \in \tilde{R}_{\tilde{1}}$
and $(X_1, X_3),$ $(X_2, X_3) \in \tilde{R}_{\tilde{5}}$
(i.e., they correspond to a path of length $2$ in $\tilde{\G}_{\tilde{5}}$).
Since $p^1_{12} = 2$ and $p^1_{13} = 0$,
we may apply Lemma~\ref{lem:hamming} to conclude that
the graph $\G^{(2)}_1$ is isomorphic to $3K_{3,3}$,
with the partitions of vertices of $X_1$ and $X_2$
corresponding to the connected components of $\G^{(2)}_1$
coincinding with spreads of $\G_2|_{X_1}$ and $\G_2|_{X_2}$.
The relation scheme $\A^{(2)}$ is thus uniquely determined,
and since $\tilde{5} = \{5\}$,
it can be uniquely extended into the relation scheme $\A^{(3)}$.

We attempt to compute the corresponding matrix $U$
with the coefficients of the unit vectors $u'_x \in S_1$ ($x \in X^{(3)}$)
using Algorithm~\ref{alg:unitvecs}
and obtain a matrix with full column rank,
thus uniquely determining the orthonormal basis of $S_1$ being used.

By the above argument,
each vertex $y \in X \setminus X^{(3)}$ is in relation $R_5$
with all vertices in one of $X_1$ and $X_2$,
in relation $R_1$ with six vertices forming two lines
in each of the remaining $(R_2 \cup R_3)$-cliques within $X^{(3)}$,
and in relation $R_4$ with the other vertices of $X^{(3)}$.
Since there are $6$ lines in each of $X_\ell$ ($1 \le \ell \le 3$),
we examine the $2 \cdot 6^2 = 72$ candidates for such vertices $y$
and attempt to find the corresponding unit vectors $u'_y$.
Among them, we find $36$ vertices $y$ such that $u'_y$ is a unit vector,
which is precisely twice the size of the set $X \setminus X^{(3)}$.

Let $Y$ be the set of all such vertices.
Since $X \setminus X^{(3)} = X_4 \cup X_5$
and $(X_4, X_5) \in \tilde{R}_{\tilde{5}}$,
we define a graph $(Y, R')$,
where for a pair of vertices $y, z \in Y$
we have $(y, z) \in R'$ if and only if
$\langle u'_y, u'_z \rangle = {Q_{i1} \over m_1}$ for some $i \in \{2, 3, 5\}$.
We find that this graph consists of two disjoint $18$-cliques
-- we label their vertex sets by $Y_1$ and $Y_2$.
We may then attempt to construct $\A$
by identifying $Y_1$ or $Y_2$ with $X \setminus X^{(3)}$
and using the inner products among the vectors $u'_x$ ($x \in X$)
to determine their relations.
We verify that both the obtained relation schemes
are association schemes with the same parameters as $\A$.
Since the two association schemes are isomorphic,
we conclude that there is, up to isomorphism,
precisely one association scheme with the parameters of $\A$.
\end{proof}

The
\href{https://nbviewer.org/github/jaanos/eigenspace-embeddings/blob/main/QPG5-6-45-5.ipynb}{\tt QPG5-6-45-5.ipynb}
notebook on the {\tt eigenspace-embeddings} repository~\cite{v25}
illustrates the computation needed to obtain the above result.

\begin{remark}
\label{rem:p5-6-45-5}
Uniqueness can be also proved without computing a spherical representation:
Lemma~\ref{lem:hamming} and the intersection number $p^5_{11} = 1$
imply that for each $(R_2 \cup R_3)$-clique $Y$,
different spreads of $\G_2|_Y$ are used to determine
which vertices of the two $(R_2 \cup R_3)$-cliques $Y'$ and $Y''$
such that $(Y, Y'), (Y, Y'') \in \tilde{R}_{\tilde{1}}$
are in relation $R_1$ with the vertices of $Y$.
This uniquely determines the structure of $\A$,
as shown in Figure~\ref{fig:c352}.

The graph $\G_1 = (X, R_1)$ is a $6$-regular arc-transitive graph
of diameter $4$ and girth $4$.
It can be described by taking the vertex set $X = \Z_5 \times \Z_3 \times \Z_3$
and the relation $R_1 = \vec{R}_1 \cup \vec{R}_1^\top$,
where
$\vec{R}_1 = \{((\ell, r, s), (\ell+1, s, t)) \mid \ell \in \Z_5, r, s, t \in \Z_3\}$.
From this description we can see
that $\G_1$ is isomorphic to the graph $C(3, 5, 2)$
as defined by Praeger and Xu~\cite{px89}.
Its group of automorphisms has (large) order $77760$
-- this also holds for the group of automorphisms of $\A$.
Its natural action on $X^2$ preserves the partition $\R$
-- i.e.,
each relation of $\A$ corresponds to an orbit of the action.
Note in particular that,
although the graphs $\G_2|_{X_\ell}$ and $\G_3|_{X_\ell}$ ($1 \le \ell \le 5$)
are mutually isomorphic,
no automorphism of $\A$ exchanges $R_2$ and $R_3$
(as it can be seen from, e.g., $p^1_{12} \ne p^1_{13}$).
\end{remark}

\begin{figure}[t]
\makebox[\textwidth][c]{
	\beginpgfgraphicnamed{fig-c352}
	\begin{tikzpicture}[style=thick,scale=0.7]
		\tikzstyle{every node}=[draw, rounded rectangle, inner xsep=0.7cm]

\node (y11) at ($(xyz polar cs:angle= 90,radius=3) + (xyz polar cs:angle=135,radius= 0.5)$) [rotate= 45] {};
\node (y12) at ($(xyz polar cs:angle= 90,radius=3) + (xyz polar cs:angle=135,radius= 0  )$) [rotate= 45] {};
\node (y13) at ($(xyz polar cs:angle= 90,radius=3) + (xyz polar cs:angle=135,radius=-0.5)$) [rotate= 45] {};
\node (z11) at ($(xyz polar cs:angle= 90,radius=3) + (xyz polar cs:angle= 45,radius= 0.5)$) [rotate=135] {};
\node (z12) at ($(xyz polar cs:angle= 90,radius=3) + (xyz polar cs:angle= 45,radius= 0  )$) [rotate=135] {};
\node (z13) at ($(xyz polar cs:angle= 90,radius=3) + (xyz polar cs:angle= 45,radius=-0.5)$) [rotate=135] {};
\node (y21) at ($(xyz polar cs:angle=162,radius=3) + (xyz polar cs:angle=207,radius= 0.5)$) [rotate=117] {};
\node (y22) at ($(xyz polar cs:angle=162,radius=3) + (xyz polar cs:angle=207,radius= 0  )$) [rotate=117] {};
\node (y23) at ($(xyz polar cs:angle=162,radius=3) + (xyz polar cs:angle=207,radius=-0.5)$) [rotate=117] {};
\node (z21) at ($(xyz polar cs:angle=162,radius=3) + (xyz polar cs:angle=117,radius= 0.5)$) [rotate=207] {};
\node (z22) at ($(xyz polar cs:angle=162,radius=3) + (xyz polar cs:angle=117,radius= 0  )$) [rotate=207] {};
\node (z23) at ($(xyz polar cs:angle=162,radius=3) + (xyz polar cs:angle=117,radius=-0.5)$) [rotate=207] {};
\node (y31) at ($(xyz polar cs:angle=234,radius=3) + (xyz polar cs:angle=279,radius= 0.5)$) [rotate=189] {};
\node (y32) at ($(xyz polar cs:angle=234,radius=3) + (xyz polar cs:angle=279,radius= 0  )$) [rotate=189] {};
\node (y33) at ($(xyz polar cs:angle=234,radius=3) + (xyz polar cs:angle=279,radius=-0.5)$) [rotate=189] {};
\node (z31) at ($(xyz polar cs:angle=234,radius=3) + (xyz polar cs:angle=189,radius= 0.5)$) [rotate=279] {};
\node (z32) at ($(xyz polar cs:angle=234,radius=3) + (xyz polar cs:angle=189,radius= 0  )$) [rotate=279] {};
\node (z33) at ($(xyz polar cs:angle=234,radius=3) + (xyz polar cs:angle=189,radius=-0.5)$) [rotate=279] {};
\node (y41) at ($(xyz polar cs:angle=306,radius=3) + (xyz polar cs:angle=351,radius= 0.5)$) [rotate=261] {};
\node (y42) at ($(xyz polar cs:angle=306,radius=3) + (xyz polar cs:angle=351,radius= 0  )$) [rotate=261] {};
\node (y43) at ($(xyz polar cs:angle=306,radius=3) + (xyz polar cs:angle=351,radius=-0.5)$) [rotate=261] {};
\node (z41) at ($(xyz polar cs:angle=306,radius=3) + (xyz polar cs:angle=261,radius= 0.5)$) [rotate=351] {};
\node (z42) at ($(xyz polar cs:angle=306,radius=3) + (xyz polar cs:angle=261,radius= 0  )$) [rotate=351] {};
\node (z43) at ($(xyz polar cs:angle=306,radius=3) + (xyz polar cs:angle=261,radius=-0.5)$) [rotate=351] {};
\node (y51) at ($(xyz polar cs:angle= 18,radius=3) + (xyz polar cs:angle= 63,radius= 0.5)$) [rotate=333] {};
\node (y52) at ($(xyz polar cs:angle= 18,radius=3) + (xyz polar cs:angle= 63,radius= 0  )$) [rotate=333] {};
\node (y53) at ($(xyz polar cs:angle= 18,radius=3) + (xyz polar cs:angle= 63,radius=-0.5)$) [rotate=333] {};
\node (z51) at ($(xyz polar cs:angle= 18,radius=3) + (xyz polar cs:angle=333,radius= 0.5)$) [rotate= 63] {};
\node (z52) at ($(xyz polar cs:angle= 18,radius=3) + (xyz polar cs:angle=333,radius= 0  )$) [rotate= 63] {};
\node (z53) at ($(xyz polar cs:angle= 18,radius=3) + (xyz polar cs:angle=333,radius=-0.5)$) [rotate= 63] {};

\draw (y11.west) to[bend left=9] (z21.west);
\draw (y12.west) to[bend left=9] (z22.west);
\draw (y13.west) to[bend left=9] (z23.west);
\draw (y21.west) to[bend left=9] (z31.west);
\draw (y22.west) to[bend left=9] (z32.west);
\draw (y23.west) to[bend left=9] (z33.west);
\draw (y31.west) to[bend left=9] (z41.west);
\draw (y32.west) to[bend left=9] (z42.west);
\draw (y33.west) to[bend left=9] (z43.west);
\draw (y41.west) to[bend left=9] (z51.west);
\draw (y42.west) to[bend left=9] (z52.west);
\draw (y43.west) to[bend left=9] (z53.west);
\draw (y51.west) to[bend left=9] (z11.west);
\draw (y52.west) to[bend left=9] (z12.west);
\draw (y53.west) to[bend left=9] (z13.west);
	\end{tikzpicture}
\endpgfgraphicnamed
}

\caption{The association scheme $\A$.
The lines are represented with rounded rectangles
and form five $\GQ(2, 1)$ geometries;
the vertices are implied at intersections of lines.
Two distinct vertices are in relation $R_1$
if they are contained in two lines connected by an edge,
in relation $R_2$ if they are contained in a common line,
in relation $R_3$ if they are contained in distinct lines
of the same $\GQ(2, 1)$,
in relation $R_4$ if they are contained
in lines of adjacent $\GQ(2, 1)$ geometries
not connected by an edge,
and in relation $R_5$ otherwise.
}
\label{fig:c352}
\end{figure}
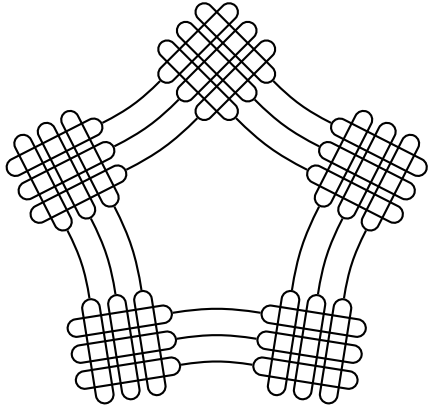

\subsection*{Acknowledgements}

This work is supported in part by the Slovenian Research and Innovation Agency
(research program P1-0285 and research projects J1-3002, N1-0216 and N1-0218).

\clearpage

\appendix
\section{Known constructions and nonexistences}
\label{app:known}

Here,
we present the results of feasibility checking
on parameter arrays for relational quotient-polynomial graphs
marked as feasible in~\cite{hm23a}.
The following tables present the parameter arrays
together with reasons for nonexistence or constructions.
In the cases when multiple parameter arrays
correspond to the same parameter set of an association scheme
(i.e., given with different orderings of the relations),
they are listed together,
and the ordering of the relations
(using the index set $\I = \{0, 1, \dots, d\}$
for $d$-class association schemes,
where $R_0$ is the identity relation
and $R_1$ is the adjacency relation of the graph)
in the given reason for nonexistence
corresponds to the ordering in the first listed parameter array.
Similarly,
the ordering of the eigenspaces
(using the index set $\J = \{0, 1, \dots, d\}$)
corresponds to the decreasing ordering of eigenvalues of $A_1$
in the ordering of the first parameter array.

The nonexistence results for $3$-, $4$-, $5$- and $6$-class
quotient-polynomial graphs are given
in Tables~\ref{tab:nonex3}, \ref{tab:nonex4}, \ref{tab:nonex5}
and~\ref{tab:nonex6}, respectively.
The given reasons for nonexistence include the following.
\begin{itemize}
\item handshake:
the handshake lemma is not satisfied
(i.e., $k_i p^i_{ij}$ is odd for some $i, j \in \I$),
\item multiplicities:
the multiplicities of the association scheme are nonintegral,
\item $q^h_{ij} < 0$:
the specified Krein parameter is negative (see~\cite[Theorem~2.3.2]{bcn89}),
\item absolute bound:
the absolute bound is exceeded (see~\cite[Theorem~2.3.3]{bcn89}),
\item $\tilde{k}_{\tilde{\imath}}$ or
$\tilde{p}^{\tilde{h}}_{\tilde{\imath}\tilde{\jmath}} \not\in \Z$
in $\A / \tilde{0}$:
the specified intersection number
of the quotient scheme for the specified imprimitivity set
is nonintegral,
\item conference for $\A / \tilde{0}$:
the quotient scheme for the specified imprimitivity set
has the parameters of a conference graph of infeasible order
(see~\cite[\S1.3]{bcn89}),
\item no spread in SRG~\cite{ht96}:
the $3$-class association scheme
corresponds to a strongly regular graph with a spread
(see~\cite[Proposition~2.2]{ht96}),
but a strongly regular graph with the resulting parameters
(obtained by fusing two of the classes)
does not admit a spread
(see~\cite[Theorems~2.4 and~6.1]{ht96}),
\item no solution for $(r, s, t)$:
there is no solution for triple intersection numbers for vertices $x, y, z$
such that $(x, y) \in R_r$, $(x, z) \in R_s$ and $(y, z) \in R_t$
(see~\cite[\S2.2]{gvw21}),
\item forbidden quadruple $(r, s, t; h, i, j)$:
there is a contradiction
for triple intersection numbers for vertices $w, x, y, z$
such that $(x, y) \in R_r$, $(x, z) \in R_s$, $(y, z) \in R_t$,
$(w, x) \in R_h$, $(w, y) \in R_i$ and $(w, z) \in R_j$
(see~\cite[Corollary~4.2]{gvw21}),
\item a reference:
the nonexistence condition is given as a result in~\cite{bcn89}
(in one case applied to a fusion scheme
obtained by taking the unions of the specified relations)
or the cited paper,
\item not in classification~\cite{hm19}:
no association scheme with the given parameters
appears in the classification of association schemes with few vertices
by Hanaki and Miyamoto,
\item Theorem in \S\ref{sec:nonex}:
all known feasibility conditions are satisfied,
but nonexistence is shown in Section~\ref{sec:nonex}.
\end{itemize}

\begin{table}
\centering
\footnotesize
\begin{tabular}{c|c|c}
order & parameter array & reason for nonexistence \\ \hline
$35$ & $[[12, 6, 16], [2, 3; 3]]$ & $q^1_{22} < 0$ \\
$35$ & $[[12, 6, 16], [4, 3; 3]]$ & no spread in SRG~\cite{ht96} \\
$35$ & $[[12, 16, 6], [3, 0; 8]]$ & $q^1_{22} < 0$ \\
$35$ & $[[24, 4, 6], [18, 16; 4]]$ & \cite[Prop.~1.10.5.]{bcn89} \\
$36$ & $[[12, 3, 20], [12, 3; 0]]$, $[[20, 3, 12], [20, 15; 0]]$ & absolute bound \\
$36$ & $[[15, 5, 15], [6, 2; 3]]$, $[[15, 5, 15], [6, 10; 3]]$ & $q^1_{13} < 0$ \\
$38$ & $[[18, 1, 18], [18, 9; 0]]$ & $\tilde{p}^{\tilde{1}}_{\tilde{1}\tilde{1}} \not\in \Z$ in $\A / \{0, 2\}$ \\
$39$ & $[[12, 12, 14], [5, 0; 6]]$ & multiplicities \\
$40$ & $[[14, 4, 21], [14, 6; 0]]$, $[[21, 4, 14], [21, 12; 0]]$ & absolute bound \\
$40$ & $[[18, 3, 18], [6, 8; 2]]$ & \cite[Lemma~6.1]{vd99} \\
$40$ & $[[18, 9, 12], [2, 6; 6]]$ & $q^1_{11} < 0$ \\
$40$ & $[[18, 9, 12], [10, 6; 6]]$ & no spread in SRG~\cite{ht96} \\
$44$ & $[[10, 3, 30], [10, 2; 0]]$ & $\tilde{k}_{\tilde{1}} \not\in \Z$ in $\A / \{0, 2\}$ \\
$45$ & $[[8, 8, 28], [1, 0; 2]]$ & $q^1_{13} < 0$ \\
$45$ & $[[8, 32, 4], [1, 0; 8]]$ & \cite[Prop.~4.3.3.]{bcn89} \\
$45$ & $[[12, 4, 28], [12, 3; 0]]$ & $\tilde{k}_{\tilde{1}} \not\in \Z$ in $\A / \{0, 2\}$ \\
$48$ & $[[10, 2, 35], [10, 2; 0]]$ & $\tilde{k}_{\tilde{1}} \not\in \Z$ in $\A / \{0, 2\}$ \\
$50$ & $[[14, 28, 7], [5, 0; 12]]$ & absolute bound \\
$51$ & $[[8, 2, 40], [8, 1; 0]]$, $[[40, 2, 8], [40, 35; 0]]$ & $\tilde{k}_{\tilde{1}} \not\in \Z$ in $\A / \{0, 2\}$ \\
$51$ & $[[16, 2, 32], [16, 5; 0]]$, $[[16, 32, 2], [5, 16; 0]]$, $[[32, 2, 16], [32, 22; 0]]$ & $\tilde{k}_{\tilde{1}} \not\in \Z$ in $\A / \{0, 2\}$ \\
$54$ & $[[12, 1, 40], [12, 3; 0]]$, $[[40, 1, 12], [40, 30; 0]]$ & $\tilde{p}^{\tilde{3}}_{\tilde{1}\tilde{1}} \not\in \Z$ in $\A / \{0, 2\}$ \\
$54$ & $[[26, 1, 26], [26, 13; 0]]$ & $\tilde{p}^{\tilde{1}}_{\tilde{1}\tilde{1}} \not\in \Z$ in $\A / \{0, 2\}$ \\
$56$ & $[[13, 39, 3], [3, 0; 13]]$ & handshake \\
$56$ & $[[15, 10, 30], [6, 2; 2]]$ & absolute bound \\
$56$ & $[[15, 30, 10], [4, 0; 12]]$, $[[30, 10, 15], [12, 14; 8]]$ & $q^1_{13} < 0$ \\
$56$ & $[[18, 1, 36], [18, 8; 0]]$, $[[36, 1, 18], [36, 20; 0]]$ & $q^3_{33} < 0$ \\
$56$ & $[[18, 7, 30], [18, 6; 0]]$, $[[30, 7, 18], [30, 20; 0]]$ & absolute bound \\
$56$ & $[[20, 5, 30], [4, 8; 2]]$, $[[30, 5, 20], [18, 15; 3]]$ & Fon-Der-Flaass~\cite{fdf93b} \\
$56$ & $[[24, 7, 24], [24, 12; 0]]$ & absolute bound \\
$56$ & $[[27, 27, 1], [13, 0; 27]]$ & handshake \\
$56$ & $[[39, 3, 13], [39, 30; 0]]$ & handshake \\
$58$ & $[[21, 28, 8], [15, 0; 21]]$ & \cite[Prop.~1.10.4.]{bcn89} \\
$60$ & $[[14, 3, 42], [14, 2; 0]]$, $[[42, 3, 14], [42, 36; 0]]$ & $\tilde{k}_{\tilde{1}} \not\in \Z$ in $\A / \{0, 2\}$ \\
$60$ & $[[14, 3, 42], [14, 3; 0]]$, $[[42, 3, 14], [42, 33; 0]]$ & $\tilde{k}_{\tilde{1}} \not\in \Z$ in $\A / \{0, 2\}$ \\
$60$ & $[[14, 42, 3], [2, 0; 14]]$, $[[42, 3, 14], [28, 33; 3]]$ & absolute bound \\
$60$ & $[[28, 3, 28], [28, 14; 0]]$ & $\tilde{p}^{\tilde{1}}_{\tilde{1}\tilde{1}} \not\in \Z$ in $\A / \{0, 2\}$ \\
\end{tabular}
\caption{Parameter arrays for $3$-class relational QPGs
marked as feasible in~\cite{hm23a}
which fail a known feasibility condition.}
\label{tab:nonex3}
\end{table}

\begin{remark}
\label{rem:spread}
Nonexistence for the two examples in Table~\ref{tab:nonex3}
which are ruled out by results in~\cite{ht96}
can also be shown using the technique described in Section~\ref{sec:embeddings}.
The proofs are given in the \href{https://nbviewer.org/github/jaanos/eigenspace-embeddings/blob/main/QPG3-12-35-16.ipynb}{\tt QPG3-12-35-16.ipynb}
and
\href{https://nbviewer.org/github/jaanos/eigenspace-embeddings/blob/main/QPG3-18-40-12.ipynb}{\tt QPG3-18-40-12.ipynb}
notebooks on the {\tt eigenspace-embeddings} repository~\cite{v25}.
\end{remark}

\begin{table}
\centering
\footnotesize
\begin{tabular}{c|c|c}
order & parameter array & reason for nonexistence \\ \hline
$20$ & $[[8, 2, 1, 8], [4, 0, 3; 0, 1; 1]]$ & not in classification~\cite{hm19} \\
$24$ & $[[6, 1, 6, 10], [6, 1, 0; 0, 0; 3]]$ & multiplicities \\
$27$ & $[[12, 2, 6, 6], [12, 6, 4; 0, 0; 6]]$ & absolute bound \\
$32$ & $[[18, 1, 6, 6], [18, 12, 9; 0, 0; 6]]$ & no solution for $(4, 4, 1)$ \\
$40$ & $[[8, 3, 12, 16], [8, 0, 2; 0, 0; 6]]$ & no solution for $(1, 1, 4)$ \\
$40$ & $[[9, 9, 3, 18], [4, 0, 0; 3, 1; 1]]$ & $q^1_{14} < 0$ \\
$40$ & $[[12, 3, 12, 12], [12, 0, 2; 0, 0; 8]]$ & $q^1_{14} < 0$ \\
$42$ & $[[10, 1, 10, 20], [10, 1, 0; 0, 0; 4]]$ & $q^1_{14} < 0$ \\
$42$ & $[[10, 1, 10, 20], [10, 1, 2; 0, 0; 4]]$ & no solution for $(1, 1, 3)$ \\
$42$ & $[[10, 20, 1, 10], [3, 0, 0; 0, 6; 1]]$ & \cite[Thm.~4.4.11.]{bcn89} \\
$42$ & $[[12, 5, 12, 12], [12, 0, 4; 0, 0; 8]]$ & absolute bound \\
$44$ & $[[12, 1, 10, 20], [12, 6, 3; 0, 0; 3]]$ & $q^3_{33} < 0$ \\
$45$ & $[[8, 8, 4, 24], [1, 0, 2; 2, 1; 1]]$ & Theorem~\ref{thm:p4-8-45-18} \\
$45$ & $[[12, 2, 12, 18], [12, 3, 0; 0, 0; 6]]$ & $q^1_{14} < 0$ \\
$45$ & $[[12, 4, 4, 24], [6, 0, 3; 0, 1; 2]]$ & Theorem~\ref{thm:p4-12-45-52} \\
$48$ & $[[12, 5, 12, 18], [12, 0, 4; 0, 0; 8]]$ & absolute bound \\
$48$ & $[[12, 7, 12, 16], [12, 0, 3; 0, 0; 9]]$ & absolute bound \\
$52$ & $[[12, 1, 14, 24], [12, 0, 5; 0, 0; 7]]$ & $\tilde{p}^{\tilde{4}}_{\tilde{1}\tilde{1}} \not\in \Z$ in $\A / \{0, 2\}$ \\
$54$ & $[[12, 1, 16, 24], [12, 3, 1; 0, 0; 6]]$ & no solution for $(1, 1, 4)$ \\
$54$ & $[[12, 1, 16, 24], [12, 3, 2; 0, 0; 6]]$ & $\tilde{p}^{\tilde{3}}_{\tilde{1}\tilde{1}} \not\in \Z$ in $\A / \{0, 2\}$ \\
$54$ & $[[12, 5, 12, 24], [12, 0, 2; 0, 0; 6]]$ & absolute bound \\
$54$ & $[[12, 8, 15, 18], [12, 0, 2; 0, 0; 10]]$ & absolute bound \\
$56$ & $[[9, 36, 1, 9], [2, 0, 0; 0, 8; 1]]$ & $q^4_{44} < 0$ \\
$56$ & $[[10, 18, 9, 18], [5, 0, 0; 0, 5; 5]]$ & $q^1_{13} < 0$ \\
$56$ & $[[12, 1, 12, 30], [12, 2, 0; 0, 0; 4]]$ & $q^1_{14} < 0$ \\
$57$ & $[[10, 10, 6, 30], [2, 0, 2; 5, 1; 1]]$ & $q^1_{11} < 0$ \\
$60$ & $[[10, 4, 20, 25], [10, 0, 2; 0, 0; 8]]$ & no solution for $(1, 1, 4)$ \\
$60$ & $[[12, 2, 18, 27], [12, 0, 4; 0, 0; 8]]$ & $\tilde{p}^{\tilde{4}}_{\tilde{1}\tilde{1}} \not\in \Z$ in $\A / \{0, 2\}$ \\
$60$ & $[[12, 5, 18, 24], [12, 0, 3; 0, 0; 9]]$ & no solution for $(1, 1, 4)$
\end{tabular}
\caption{Parameter arrays for $4$-class relational QPGs
marked as feasible in~\cite{hm23a}
which fail a known feasibility condition.}
\label{tab:nonex4}
\end{table}

\begin{table}
\centering
\footnotesize
\begin{tabular}{c|>{\centering\arraybackslash}p{64mm}|>{\centering\arraybackslash}p{55mm}}
order & parameter array & reason for nonexistence \\ \hline
$32$ & $[[6, 6, 1, 9, 9], [1, 0, 2, 0; 6, 0, 2; 0, 0; 2]]$ & absolute bound \\
$35$ & $[[6, 12, 2, 2, 12], [2, 0, 0, 0; 0, 6, 2; 0, 1; 0]]$ & multiplicities \\
$35$ & $[[12, 2, 2, 6, 12], [6, 0, 4, 4; 0, 0, 1; 2, 1; 2]]$ & \cite[Prop.~1.10.5.]{bcn89} for $(\{0\}, \{4\}, \{1, 5\}, \{2, 3\})$-fusion \\
$36$ & $[[8, 8, 1, 2, 16], [1, 0, 4, 2; 8, 4, 2; 0, 0; 0]]$ & not in classification~\cite{hm19} \\
$40$ & $[[6, 3, 6, 12, 12], [2, 0, 2, 0; 2, 0, 0; 0, 2; 2]]$ & $q^1_{15} < 0$ \\
$40$ & $[[6, 24, 1, 2, 6], [1, 0, 3, 0; 0, 0, 4; 0, 1; 1]]$ & forbidden quadruple $(5, 5, 2; 4, 5, 5)$ \\
$40$ & $[[12, 6, 3, 6, 12], [6, 4, 2, 0; 0, 2, 1; 0, 2; 3]]$ & $q^1_{15} < 0$ \\
$40$ & $[[12, 12, 1, 2, 12], [4, 12, 0, 3; 0, 0, 4; 0, 0; 2]]$ & $\tilde{p}^{\tilde{1}}_{\tilde{1}\tilde{1}} \not\in \Z$ in $\A / \{0, 3\}$ \\
$42$ & $[[6, 18, 1, 4, 12], [1, 6, 0, 0; 0, 0, 3; 0, 0; 2]]$, $[[12, 4, 1, 6, 18], [6, 12, 2, 4; 0, 4, 0; 0, 0; 2]]$ & absolute bound \\
$42$ & $[[8, 8, 1, 8, 16], [2, 8, 0, 2; 0, 4, 0; 0, 0; 2]]$ & $q^1_{11} < 0$ \\
$42$ & $[[10, 10, 1, 10, 10], [5, 0, 0, 4; 0, 5, 0; 0, 1; 5]]$ & conference for $\A / \{0, 3\}$ \\
$42$ & $[[10, 10, 1, 10, 10], [8, 0, 0, 1; 0, 2, 0; 0, 1; 8]]$ & multiplicities \\
$42$ & $[[12, 4, 1, 12, 12], [3, 0, 5, 2; 0, 2, 1; 0, 1; 5]]$ & absolute bound \\
$42$ & $[[12, 4, 1, 12, 12], [3, 0, 5, 3; 0, 2, 1; 0, 1; 5]]$ & absolute bound \\
$44$ & $[[9, 9, 1, 12, 12], [4, 0, 3, 0; 9, 0, 3; 0, 0; 6]]$ & multiplicities \\
$45$ & $[[6, 18, 2, 6, 12], [1, 0, 2, 0; 0, 0, 3; 0, 1; 2]]$, $[[12, 18, 2, 6, 6], [4, 6, 8, 0; 0, 0, 6; 0, 2; 4]]$ & Theorem~\ref{thm:p5-6-45-22} \\
$45$ & $[[6, 18, 2, 6, 12], [1, 6, 0, 0; 0, 0, 3; 0, 0; 3]]$, $[[12, 6, 2, 6, 18], [6, 12, 0, 4; 0, 6, 0; 0, 0; 2]]$ & no solution for $(1, 1, 2)$ \\
$45$ & $[[12, 4, 4, 12, 12], [6, 0, 6, 3; 0, 1, 1; 2, 2; 3]]$ & $q^1_{33} < 0$ \\
$48$ & $[[10, 5, 2, 10, 20], [2, 10, 0, 2; 0, 4, 0; 0, 0; 2]]$ & $q^2_{22} < 0$ \\
$48$ & $[[10, 5, 2, 10, 20], [2, 10, 0, 3; 0, 4, 0; 0, 0; 3]]$ & $q^5_{55} < 0$ \\
$48$ & $[[10, 5, 2, 10, 20], [2, 10, 1, 2; 0, 4, 0; 0, 0; 2]]$ & $q^2_{22} < 0$ \\
$48$ & $[[10, 10, 1, 6, 20], [1, 0, 5, 2; 10, 5, 2; 0, 0; 0]]$ & $q^1_{11} < 0$ \\
$48$ & $[[10, 10, 3, 4, 20], [1, 10, 0, 2; 0, 10, 2; 0, 0; 0]]$ & $q^2_{22} < 0$ \\
$48$ & $[[10, 20, 3, 4, 10], [3, 10, 0, 0; 0, 0, 6; 0, 0; 4]]$ & $q^5_{55} < 0$ \\
$48$ & $[[12, 4, 1, 6, 24], [6, 12, 0, 4; 0, 4, 0; 0, 0; 2]]$ & multiplicities \\
$48$ & $[[12, 12, 3, 8, 12], [2, 12, 0, 1; 0, 6, 6; 0, 0; 4]]$ & absolute bound \\
$48$ & $[[14, 2, 3, 7, 21], [7, 0, 4, 4; 0, 2, 0; 0, 2; 2]]$ & handshake \\
$48$ & $[[14, 4, 1, 14, 14], [7, 0, 5, 3; 0, 0, 2; 0, 1; 5]]$ & multiplicities \\
$54$ & $[[12, 4, 1, 12, 24], [3, 12, 0, 3; 0, 3, 0; 0, 0; 3]]$ & absolute bound \\
$54$ & $[[12, 24, 2, 3, 12], [3, 12, 0, 0; 0, 0, 6; 0, 0; 3]]$ & multiplicities \\
$56$ & $[[9, 9, 1, 18, 18], [1, 0, 2, 1; 9, 2, 1; 0, 0; 3]]$ & handshake \\
$56$ & $[[9, 9, 9, 10, 18], [4, 0, 0, 2; 5, 0, 0; 0, 2; 5]]$ & multiplicities \\
$56$ & $[[12, 12, 1, 6, 24], [1, 0, 6, 3; 12, 6, 3; 0, 0; 0]]$ & absolute bound \\
$56$ & $[[12, 12, 1, 6, 24], [1, 12, 0, 2; 0, 4, 4; 0, 0; 2]]$ & no solution for $(1, 1, 2)$ \\
$56$ & $[[12, 12, 1, 6, 24], [4, 12, 0, 3; 0, 4, 1; 0, 0; 2]]$ & $q^2_{22} < 0$ \\
$56$ & $[[12, 12, 3, 4, 24], [1, 4, 6, 3; 8, 6, 3; 0, 0; 0]]$ & absolute bound \\
$56$ & $[[12, 12, 3, 4, 24], [2, 4, 6, 3; 8, 6, 3; 0, 0; 0]]$, $[[12, 24, 3, 4, 12], [3, 4, 6, 0; 0, 0, 6; 0, 2; 2]]$ & $q^4_{45} < 0$ \\
$56$ & $[[12, 12, 3, 4, 24], [2, 12, 0, 3; 0, 12, 3; 0, 0; 0]]$, $[[12, 24, 3, 4, 12], [3, 12, 0, 0; 0, 0, 6; 0, 0; 4]]$ & absolute bound \\
$60$ & $[[6, 24, 1, 12, 16], [1, 6, 0, 0; 0, 0, 3; 0, 0; 3]]$ & multiplicities \\
$60$ & $[[12, 8, 3, 12, 24], [3, 12, 0, 3; 0, 6, 0; 0, 0; 3]]$ & no solution for $(1, 1, 2)$ \\
$60$ & $[[12, 12, 2, 9, 24], [1, 12, 0, 2; 0, 4, 4; 0, 0; 3]]$ & no solution for $(1, 1, 2)$ \\
$60$ & $[[12, 32, 1, 2, 12], [3, 12, 0, 0; 0, 0, 8; 0, 0; 2]]$ & $\tilde{p}^{\tilde{2}}_{\tilde{1}\tilde{1}} \not\in \Z$ in $\A / \{0, 3\}$
\end{tabular}
\caption{Parameter arrays for $5$-class relational QPGs
marked as feasible in~\cite{hm23a}
which fail a known feasibility condition.}
\label{tab:nonex5}
\end{table}

\begin{table}
\centering
\footnotesize
\begin{tabular}{c|c|c}
order & parameter array & reason for nonexistence \\ \hline
$36$ & $[[8, 8, 1, 2, 8, 8], [4, 0, 0, 3, 0; 0, 4, 0, 3; 0, 0, 1; 1, 0; 4]]$ & not in classification~\cite{hm19} \\
$40$ & $[[6, 6, 1, 6, 8, 12], [2, 6, 0, 0, 1; 0, 0, 3, 0; 0, 0, 0; 0, 3; 2]]$ & $q^2_{22} < 0$ \\
$42$ & $[[12, 12, 1, 4, 6, 6], [5, 0, 3, 6, 4; 12, 3, 4, 6; 0, 0, 0; 2, 2; 0]]$ & $q^5_{66} < 0$ \\
$48$ & $[[12, 12, 1, 2, 8, 12], [4, 12, 0, 3, 2; 0, 0, 6, 4; 0, 0, 0; 0, 2; 2]]$ & $\tilde{p}^{\tilde{5}}_{\tilde{1}\tilde{1}} \not\in \Z$ in $\A / \{0, 3\}$ \\
$48$ & $[[12, 12, 1, 2, 8, 12], [4, 0, 6, 3, 0; 0, 0, 6, 4; 0, 0, 1; 0, 1; 2]]$ & $q^1_{16} < 0$ \\
$54$ & $[[12, 6, 2, 6, 9, 18], [6, 12, 0, 0, 4; 0, 6, 0, 0; 0, 0, 0; 0, 2; 6]]$ & no solution for $(4, 4, 6)$ \\
$54$ & $[[12, 6, 3, 8, 12, 12], [6, 0, 6, 0, 4; 4, 0, 2, 0; 0, 0, 2; 4, 0; 6]]$ & $q^1_{14} < 0$ \\
$54$ & $[[12, 8, 3, 6, 12, 12], [3, 0, 8, 0, 5; 8, 0, 4, 0; 0, 0, 1; 2, 0; 6]]$ & $q^1_{11} < 0$ \\
$56$ & $[[12, 3, 4, 12, 12, 12], [4, 0, 0, 7, 3; 0, 2, 0, 0; 0, 2, 2; 3, 7; 0]]$ & absolute bound \\
$56$ & $[[12, 12, 3, 4, 12, 12], [6, 12, 0, 2, 0; 0, 0, 0, 6; 0, 0, 0; 4, 0; 6]]$ & absolute bound \\
$60$ & $[[12, 12, 2, 3, 6, 24], [3, 0, 0, 4, 0; 0, 4, 4, 3; 0, 0, 1; 0, 1; 1]]$ & $q^1_{16} < 0$ \\
$63$ & $[[12, 6, 2, 6, 18, 18], [6, 12, 0, 4, 0; 0, 6, 0, 0; 0, 0, 0; 2, 0; 6]]$ & no solution for $(4, 4, 5)$ \\
$64$ & $[[12, 12, 1, 6, 8, 24], [4, 0, 6, 0, 2; 12, 6, 0, 2; 0, 0, 0; 0, 0; 4]]$ & $q^5_{56} < 0$ \\
$64$ & $[[12, 12, 1, 8, 12, 18], [5, 0, 0, 0, 4; 0, 3, 5, 0; 0, 1, 0; 0, 4; 4]]$ & $q^2_{22} < 0$ \\
$70$ & $[[12, 12, 1, 8, 12, 24], [4, 0, 3, 1, 2; 0, 0, 4, 2; 0, 1, 0; 2, 2; 2]]$ & $q^4_{66} < 0$
\end{tabular}
\caption{Parameter arrays for $6$-class relational QPGs
marked as feasible in~\cite{hm23a}
which fail a known feasibility condition.}
\label{tab:nonex6}
\end{table}

Tables~\ref{tab:cons3}, \ref{tab:cons4}, \ref{tab:cons5} and~\ref{tab:cons6}
show the constructions of association schemes
for the parameter arrays marked as feasible in~\cite{hm23a},
together with the number of corresponding association scheme
and a reference (if applicable).
When the reference is not given,
the number of corresponding association schemes
can be deduced from the tables by Van Dam~\cite{vd99}
and the classifcation by Hanaki and Miyamoto~\cite{hm19}
(as well as other well-known results on distance-regular graphs,
see~\cite{b13} and~\cite{bcn89} for more details),
and the properties of the constructions given below.

Most of the constructions involve derivation from smaller association schemes.
Although the resulting association schemes are symmetric,
some of the building blocks are asymmetric association schemes
-- i.e.,
we replace the requirement
that the relations of the association scheme are symmetric
with the requirement that the relation set is closed under transposition.
We use the following derivations from the association schemes
$\A = (X, \R = \{R_i \mid i \in \I\})$,
$\A' = (X', \R' = \{R'_i \mid i \in \I'\})$
and $\A^{(x)} = (X^{(x)}, \R^{(x)} = \{R^{(x)}_i \mid i \in \I'\})$
($x \in X$),
where $0 \in \I$ and $R_0 = \Id_X$,
and $\A^{(x)}$ has the same parameters as $\A'$.

\begin{itemize}
\item The {\em direct product}~\cite[\S3.2]{b04}
$$
\A \times \A' = \left(X \times X', \{R_i \otimes R'_j \mid i \in \I, j \in \I'\}\right).
$$
An association scheme with the same parameters as $\A \times \A'$
is necessarily a direct product of association schemes
with the same parameters as $\A$ and $\A'$.

\item The {\em lexicographic coproduct}~\cite[\S10.6.1]{b04}
\begin{multline*}
\A[f] = \Bigg(\coprod_{x \in X} X^{(x)},
\ejcbrk
\{\{((x, y), (x', y')) \mid (x, x') \in R_i, y \in X^{(x)}, y' \in X^{(x')}\} \mid i \in \I \setminus \{0\}\} \\
{} \cup \{\{((x, y), (x, y')) \mid x \in X, (y, y') \in R^{(x)}_j\} \mid j \in \I'\}\Bigg),
\end{multline*}
where $f$ is a map from $X$
to the set of association schemes with the same parameters as $\A'$
such that $f(x) = \A^{(x)}$ ($x \in X$).
In the case when $f(x) = \A'$ for all $x \in X$,
we write $\A[f] = \A[\A']$ and call the resulting association scheme
the {\em lexicographic product}\footnote{
In the literature,
this product is known as {\em nesting}
or the {\em wreath product}~\cite[\S3.4]{b04}
and denoted by $\A \wr \A'$ or $\A / \A'$.
However, unlike the direct product of association schemes,
which is a natural generalization of the direct product of groups,
this construction is unrelated to the wreath product of groups.
Therefore,
we prefer the name {\em lexicographic product}
and adopt the notation used for the lexicographic product of graphs,
as the adjacency relation of the lexicographic product
of graphs whose adjacency relations are relations of $\A$ and $\A'$
is a union of the corresponding relations of $\A[\A']$.
}
of $\A$ and $\A'$.
An association scheme with the same parameters as $\A[f]$
is necessarily a lexicographic coproduct of association schemes
with the same parameters as $\A$ and $\A'$
(cf.~\cite{x11}, where it is assumed that $f$ is constant).

\item The $k$-th {\em Hamming power}~\cite[\S10.6.3]{b04}
$$
H(k, \A) = \left(X^k, \left\{\bigcup_{v \in {k \choose u}} \bigotimes_{j=1}^k R_{v_j} \;\middle|\; u \in \Z^\I, u \ge 0, \sum_{i=1}^k u_i = k\right\}\right),
$$
where ${k \choose u}$ is the set of all vectors from $\I^k$
in which each entry $i \in \I$ occurs $u_i$ times.
The Hamming power can be seen as a generalization of Hamming schemes
(cf.~\cite[\S1.4.3]{b04}, \cite[\S9.2]{bcn89}),
i.e.,
the Hamming scheme $H(k, n)$ is precisely the Hamming power $H(k, K_n)$
of the $1$-class association scheme on $n$ vertices.
Unlike the direct and lexicographic products,
an association scheme with the same parameters as $H(k, \A)$
is not necessarily a Hamming power
of an association scheme with the same parameters of $\A$
-- a counterexample is
the association scheme corresponding to the Shrikhande graph~\cite{s59},
which has the same parameters as $H(2, 4)$.

\item The {\em symmetrization}
$$
\A^\ddag = (X, \{R_i \cup R_i^\top \mid i \in \I\}).
$$
If $\A$ is a commutative association scheme
(i.e., $p^h_{ij} = p^h_{ji}$ for all $h, i, j \in \I$),
then $\A^\ddag$ is a symmetric association scheme.
\end{itemize}

The following association schemes are used as building blocks.
Unless noted otherwise, these association schemes are symmetric.
\begin{itemize}
\item $K_n$: the $1$-class association scheme on $n$ vertices.
\item $Z_n$: the cyclic group on $n$ vertices
as the corresponding thin association scheme.
The association scheme $Z_n$ is commutative,
but is only symmetric when $n \le 2$.
\item $C_n$: the cyclic scheme on $n$ vertices (i.e., $C_n = Z_n^\ddag$).
\item $\Had(4n)$: the association schemes
corresponding to the incidence graphs of square $2$-$(4n-1, 2n, n)$ designs
associated to Hadamard matrices of order $4n$.
\item $\GH(s, t)$: the association schemes corresponding to the point graphs
of generalized hexagons of order $(s, t)$ (see~\cite[\S6.5]{bcn89}).
\item $J(n, k)$: the Johnson scheme of $k$-subsets of a set of size $n$
(see~\cite[\S1.4.2]{b04}, \cite[\S9.1]{bcn89}).
\item $\Pair(n)$: the association scheme
of ordered $2$-subsets of a set of size $n$,
with classes corresponding to pairs matching in one coordinate,
pairs matching in different coordinates,
disjoint pairs and reversed pairs (see~\cite[\S5.5]{b04}).
\item $\Cyc(q, r)$: the cyclotomic scheme
$$
\left(\F_q, \left\{\Id_{\F_q}, \left\{(x, x + \gamma^{i + rj}) \;\middle|\; x \in \F_q, j = 1, \dots, {q-1 \over r}\right\} \;\middle|\; i = 1, \dots, r\right\}\right),
$$
where $q$ is a prime power, $r$ divides $q-1$,
and $\gamma$ generates the multiplicative group $\F_q^*$
(see~\cite[\S2.4]{d73a}).
The association scheme $\Cyc(q, r)$ is commutative,
and is symmetric precisely when $q$ is even or ${q-1 \over r}$ is even.
\item Hyperbolic quadric in $\PG(3, q)$:
the association scheme of the points of a hyperbolic quadric
in the projective geometry $\PG(3, q)$
(see~\cite[\S12.2]{bcn89} for the construction).
\item Locally $\Cyc(q, r)$:
the association scheme corresponding to
the distance-regular antipodal $r$-cover of $K_{q+1}$
whose local graphs are the graphs of the association scheme $\Cyc(q, r)$
(see~\cite[Proposition~12.5.3]{bcn89} for the construction).
\item Locally $\Conf(q)$:
the association schemes corresponding to
distance-regular antipodal double covers of $K_{q+1}$
whose local graphs are (not necessarily mutually isomorphic)
$\SRG(q, {q-1 \over 2}, {q-5 \over 4}, {q-1 \over 4})$ conference graphs.
They are in one-to-one correspondence with $2$-graphs on $q+1$ vertices.
When $q$ is a prime power with $q \equiv 1 \pmod{4}$,
an example is given by the locally $\Cyc(q, 2)$ association scheme.
\item Named graphs: the association scheme corresponding to the
(distance-regular) named graph (see~\cite{bcn89}).
\item $\SRG(n, k, \lambda, \mu)$:
the association schemes corresponding to
strongly regular graphs with parameters $(n, k, \lambda, \mu)$
(when there are multiple such graphs not sharing a common construction).
\item $\Cay(G, S)$: the association scheme corresponding to the
Cayley graph of the group $G$ with the connecting set $S$,
under the assumption that it is quotient-polynomial.
The association scheme $\Cay(G, S)$ is symmetric
if $S$ is closed under inversion in $G$.
\end{itemize}

\begin{table}
\centering
\footnotesize
\begin{tabular}{c|>{\centering\arraybackslash}p{66mm}|>{\centering\arraybackslash}p{40mm}|c|c}
order & parameter array & construction & \# & reference \\ \hline
$35$ & $[[12, 4, 18], [3, 4; 2]]$ & $\SRG(35, 16, 6, 8) \setminus \text{spread}$ & $\ge 1$ & \cite{ht96} \\
$36$ & $[[17, 17, 1], [8, 0; 17]]$ & locally $\Cyc(17, 2)$ & $1$ \\
$36$ & $[[20, 5, 10], [8, 12; 4]]$ & Sylvester & $1$ \\
$37$ & $[[12, 12, 12], [4, 5; 3]]$ & $\Cyc(37, 3)$ & $\ge 1$ \\
$39$ & $[[18, 2, 18], [18, 9; 0]]$ & $\Cyc(13, 2)[K_3]$ & $1$ \\
$39$ & $[[24, 2, 12], [12, 22; 2]]$ & $K_3 \times K_{13}$ & $1$ \\
$42$ & $[[13, 26, 2], [4, 0; 13]]$ & locally $\Cyc(13, 3)$ & $\ge 1$ \\
$43$ & $[[14, 14, 14], [4, 6; 4]]$ & $\Cyc(43, 3)$ & $\ge 1$ \\
$46$ & $[[12, 22, 11], [6, 0; 12]]$ & $\Had(24)$ & $582$ \\
$51$ & $[[16, 32, 2], [5, 0; 16]]$, $[[32, 2, 16], [16, 20; 2]]$ & locally $\Cyc(16, 3)$ & $\ge 1$ \\
$51$ & $[[24, 2, 24], [24, 12; 0]]$ & $\Cyc(17, 2)[K_3]$ & $1$ \\
$51$ & $[[32, 2, 16], [16, 30; 2]]$ & $K_3 \times K_{17}$ & $1$ \\
$52$ & $[[18, 6, 27], [6, 8; 2]]$, $[[27, 6, 18], [18, 12; 3]]$ & $\GH(3, 1)$ & $1$ \\
$52$ & $[[20, 1, 30], [20, 8; 0]]$, $[[30, 1, 20], [30, 18; 0]]$ & $\SRG(26, 10, 3, 4)[K_2]$ & $10$ \\
$52$ & $[[24, 3, 24], [24, 12; 0]]$ & $\Cyc(13, 2)[K_4]$ & $1$ \\
$52$ & $[[25, 25, 1], [12, 0; 25]]$ & locally $\Conf(25)$ & $4$ \\
$52$ & $[[25, 25, 1], [24, 0; 25]]$ & $K_2 \times K_{26}$ & $1$ \\
$52$ & $[[36, 3, 12], [24, 33; 3]]$ & $K_4 \times K_{13}$ & $1$ \\
$54$ & $[[14, 26, 13], [7, 0; 14]]$ & $\Had(28)$ & $105041$ \\
$54$ & $[[20, 1, 32], [20, 10; 0]]$, $[[32, 1, 20], [32, 16; 0]]$ & $\text{Schläfli}[K_2]$ & $1$ \\
$54$ & $[[24, 5, 24], [24, 12; 0]]$ & $H(2, 3)[K_6]$ & $1$ \\
$54$ & $[[26, 26, 1], [25, 0; 26]]$ & $K_2 \times K_{27}$ & $1$ \\
$54$ & $[[34, 2, 17], [17, 32; 2]]$ & $K_3 \times K_{18}$ & $1$ \\
$54$ & $[[40, 5, 8], [32, 35; 5]]$ & $K_6 \times K_9$ & $1$ \\
$55$ & $[[22, 10, 22], [22, 11; 0]]$ & $C_5[K_{11}]$ & $1$ \\
$55$ & $[[40, 4, 10], [30, 36; 4]]$ & $K_5 \times K_{11}$ & $1$ \\
$56$ & $[[10, 15, 30], [4, 1; 3]]$, $[[30, 10, 15], [18, 16; 6]]$ & $J(8, 3)$ & $1$ \\
$56$ & $[[24, 1, 30], [24, 8; 0]]$ & $J(8, 2)[K_2]$, $\text{Chang}_i[K_2]$ ($i = 1, 2, 3$) & $4$ \\
$56$ & $[[27, 27, 1], [10, 0; 27]]$, $[[27, 27, 1], [16, 0; 27]]$ & Gosset & $1$ \\
$56$ & $[[27, 27, 1], [26, 0; 27]]$ & $K_2 \times K_{28}$ & $1$ \\
$56$ & $[[39, 3, 13], [26, 36; 3]]$ & $K_4 \times K_{14}$ & $1$ \\
$56$ & $[[42, 6, 7], [35, 36; 6]]$ & $K_7 \times K_8$ & $1$ \\
$57$ & $[[20, 6, 30], [10, 6; 2]]$, $[[30, 6, 20], [15, 18; 3]]$ & Perkel & $1$ & \cite{cd05} \\
$57$ & $[[36, 2, 18], [18, 34; 2]]$ & $K_3 \times K_{19}$ & $1$ \\
$58$ & $[[28, 1, 28], [28, 14; 0]]$ & locally $\Conf(29)$ & $41$ \\
$58$ & $[[28, 28, 1], [27, 0; 28]]$ & $K_2 \times K_{29}$ & $1$ \\
$60$ & $[[11, 44, 4], [2, 0; 11]]$ & locally $\Cyc(11, 5)$ & $\ge 1$ \\
$60$ & $[[15, 20, 24], [3, 5; 5]]$, $[[20, 15, 24], [8, 5; 5]]$, $[[24, 15, 20], [8, 12; 6]]$ & hyperbolic quadric in $\PG(3, 5)$ & $\ge 1$ \\
$60$ & $[[18, 5, 36], [18, 6; 0]]$, $[[36, 5, 18], [36, 24; 0]]$ & $\text{Petersen}[K_6]$ & $1$ \\
$60$ & $[[24, 3, 32], [24, 12; 0]]$, $[[32, 3, 24], [32, 16; 0]]$ & $J(6, 2)[K_4]$ & $1$ \\
$60$ & $[[24, 11, 24], [24, 12; 0]]$ & $C_5[K_{12}]$ & $1$ \\
$60$ & $[[38, 2, 19], [19, 24; 2]]$ & locally $\Cyc(19, 3)$ & $\ge 1$ \\
$60$ & $[[42, 3, 14], [28, 39; 3]]$ & $K_4 \times K_{15}$ & $1$ \\
$60$ & $[[44, 4, 11], [33, 40; 4]]$ & $K_5 \times K_{12}$ & $1$ \\
$60$ & $[[45, 5, 9], [36, 40; 5]]$ & $K_6 \times K_{10}$ & $1$
\end{tabular}
\caption{Parameter arrays for $3$-class relational QPGs
marked as feasible in~\cite{hm23a}
for which constructions are known.}
\label{tab:cons3}
\end{table}

\begin{table}
\centering
\footnotesize
\begin{tabular}{c|>{\centering\arraybackslash}p{46mm}|c|c|c}
order & parameter array & construction & \# & reference \\ \hline
$40$ & $[[8, 4, 3, 24], [2, 0, 2; 0, 1; 1]]$ & {\em see Remark~\ref{rem:p4-8-40-15}} & $1$ & \cite{bbb08} \\
$41$ & $[[10, 10, 10, 10], [4, 3, 2; 2, 2; 2]]$ & $\Cyc(41, 4)$ & $\ge 1$ \\
$42$ & $[[8, 1, 16, 16], [8, 2, 0; 0, 0; 4]]$ & $\GH(2, 1)[K_2]$ & $1$ \\
$42$ & $[[9, 2, 12, 18], [9, 0, 3; 0, 0; 6]]$, $[[12, 2, 9, 18], [12, 0, 6; 0, 0; 6]]$ & $\text{Heawood}[K_3]$ & $1$ \\
$42$ & $[[12, 5, 12, 12], [12, 0, 6; 0, 0; 6]]$ & $C_7[K_6]$ & $1$ \\
$44$ & $[[10, 1, 12, 20], [10, 0, 4; 0, 0; 6]]$, $[[12, 1, 10, 20], [12, 0, 6; 0, 0; 6]]$ & $(\text{square $2$-$(11, 5, 2)$ design})[K_2]$ & $1$ \\
$52$ & $[[8, 1, 18, 24], [8, 0, 2; 0, 0; 6]]$ & $\GH(1, 3)[K_2]$ & $1$ \\
$54$ & $[[9, 24, 2, 18], [3, 0, 0; 0, 8; 1]]$ & symmetric $(3, 3)$-nets & $4$ & \cite{mt00} \\
$56$ & $[[12, 3, 16, 24], [12, 0, 4; 0, 0; 8]]$ & $\text{Heawood}[K_4]$ & $1$
\end{tabular}
\caption{Parameter arrays for $4$-class relational QPGs
marked as feasible in~\cite{hm23a}
for which constructions are known.}
\label{tab:cons4}
\end{table}

\begin{table}
\centering
\footnotesize
\begin{tabular}{c|>{\centering\arraybackslash}p{64mm}|>{\centering\arraybackslash}p{47mm}|c|c}
order & parameter array & construction & \# & reference \\ \hline
$27$ & $[[6, 6, 2, 6, 6], [3, 6, 0, 0; 0, 0, 3; 0, 0; 3]]$ & $C_9[K_3]$ & $1$ \\
$36$ & $[[8, 16, 1, 2, 8], [2, 0, 4, 0; 0, 0, 4; 0, 1; 1]]$ & $H(2, K_3[K_2])$ & $1$ \\
$40$ & $[[12, 2, 1, 12, 12], [6, 0, 4, 1; 0, 0, 1; 0, 1; 4]]$ & {\em see Remark~\ref{rem:p5-12-40-2}} & $1$ & Thm.~\ref{thm:p5-12-40-2} \\
$40$ & $[[12, 12, 1, 2, 12], [4, 12, 0, 4; 0, 0, 4; 0, 0; 2]]$ & $\Pair(5)[K_2]$ & $1$ \\
$40$ & $[[14, 2, 2, 7, 14], [7, 0, 12, 6; 0, 0, 1; 2, 1; 0]]$ & $K_8 \times C_5$ & $1$ \\
$41$ & $[[8, 8, 8, 8, 8], [3, 2, 2, 0; 1, 2, 2; 1, 2; 1]]$ & $\Cyc(41, 5)$ & $\ge 1$ \\
$42$ & $[[10, 10, 1, 10, 10], [5, 0, 4, 0; 10, 0, 4; 0, 0; 6]]$, $[[10, 10, 1, 10, 10], [6, 0, 0, 3; 0, 4, 0; 0, 1; 6]]$ & $K_2 \times J(7, 2)$ & $1$ \\
$42$ & $[[12, 12, 1, 4, 12], [4, 12, 0, 2; 0, 6, 6; 0, 0; 2]]$ & $(Z_3 \times \Cyc(7, 2))^\ddag[K_2]$ & $1$ \\
$45$ & $[[6, 4, 4, 12, 18], [3, 0, 0, 1; 0, 1, 0; 2, 0; 2]]$, $[[12, 4, 4, 6, 18], [9, 6, 0, 4; 0, 2, 0; 4, 0; 2]]$ & {\em see Remark~\ref{rem:p5-6-45-5}} & $1$ & Thm.~\ref{thm:p5-6-45-5} \\
$45$ & $[[10, 10, 4, 10, 10], [5, 10, 0, 0; 0, 0, 5; 0, 0; 5]]$ & $C_9[K_5]$ & $1$ \\
$48$ & $[[10, 2, 5, 10, 20], [5, 0, 4, 2; 2, 0, 0; 0, 2; 3]]$ & $K_3 \times \text{Clebsch}$ & $1$ \\
$48$ & $[[12, 2, 6, 9, 18], [6, 4, 4, 2; 2, 0, 0; 0, 2; 4]]$ & $K_3 \times H(2, 4)$, $K_3 \times \text{Shrikhande}$ & $2$ \\
$50$ & $[[8, 8, 1, 16, 16], [3, 0, 2, 0; 8, 0, 2; 0, 0; 6]]$ & $K_2 \times H(2, 5)$ & $1$ \\
$50$ & $[[12, 4, 3, 6, 24], [9, 0, 4, 3; 4, 0, 0; 0, 1; 2]]$ & $K_5 \times \text{Petersen}$ & $1$ \\
$50$ & $[[12, 12, 1, 12, 12], [4, 12, 2, 0; 0, 4, 4; 0, 0; 4]]$ & $\Cyc(25, 4)[K_2]$ & $1$ \\
$51$ & $[[12, 12, 2, 12, 12], [6, 12, 3, 0; 0, 3, 3; 0, 0; 3]]$ & $\Cyc(17, 4)[K_3]$ & $1$ \\
$54$ & $[[10, 10, 1, 16, 16], [1, 0, 5, 0; 10, 0, 5; 0, 0; 5]]$ & $K_2 \times \text{Schläfli}$ & $1$ \\
$54$ & $[[12, 12, 5, 12, 12], [6, 12, 0, 0; 0, 0, 6; 0, 0; 6]]$ & $C_9[K_6]$ & $1$ \\
$56$ & $[[6, 12, 1, 12, 24], [2, 6, 0, 0; 0, 0, 2; 0, 0; 2]]$, $[[12, 12, 1, 6, 24], [4, 12, 0, 2; 0, 0, 4; 0, 0; 2]]$ & $\text{Coxeter}[K_2]$ & $1$ \\
$56$ & $[[12, 12, 1, 15, 15], [6, 0, 4, 0; 12, 0, 4; 0, 0; 8]]$ & $K_2 \times J(8, 2)$, $K_2 \times \text{Chang}_i$ ($i = 1, 2, 3$) & $4$
\end{tabular}
\caption{Parameter arrays for $5$-class relational QPGs
marked as feasible in~\cite{hm23a}
for which constructions are known.}
\label{tab:cons5}
\end{table}

\begin{table}
\centering
\footnotesize
\begin{tabular}{c|>{\centering\arraybackslash}p{53mm}|c|c}
order & parameter array & construction & \# \\ \hline
$45$ & $[[12, 6, 2, 6, 6, 12],$ $[6, 12, 6, 0, 3; 0, 0, 6, 0; 0, 0, 0; 0, 3; 3]]$ & $(K_3 \times C_5)[K_3]$ & $1$ \\
$52$ & $[[12, 12, 1, 2, 12, 12],$ $[6, 12, 0, 4, 0; 0, 0, 0, 6; 0, 0, 0; 2, 0; 6]]$ & $(K_2 \times \Cyc(13, 2))[K_2]$ & $1$ \\
$56$ & $[[12, 12, 1, 6, 12, 12],$ $[4, 12, 0, 2, 0; 0, 4, 4, 2; 0, 0, 0; 2, 2; 4]]$ & $\Cay(Z_{14} \times Z_2, \{(\pm 1, 0), (\pm 2, 1), (\pm 3, 1)\})[K_2]$ & $1$
\end{tabular}
\caption{Parameter arrays for $6$-class relational QPGs
marked as feasible in~\cite{hm23a}
for which constructions are known.}
\label{tab:cons6}
\end{table}

\begin{remark} \label{rem:p4-8-40-15}
The parameter array $[[8, 4, 3, 24], [2, 0, 2; 0, 1; 1]]$
uniquely determines a quo\-ti\-ent-polynomial graph $\G_1 = (X, R_1)$
which is derived from a spherical code found by Smith,
with the following construction given by Conway and Sloane
(cf.~\cite{bbcgks09,s00}).
Uniqueness is due to Bannai, Bannai and Bannai~\cite{bbb08}.
An alternative proof of uniqueness
paralleling the proofs of the results
in Sections~\ref{sec:nonex} and~\ref{sec:uniq}
is given in the
\href{https://nbviewer.org/github/jaanos/eigenspace-embeddings/blob/main/QPG4-8-40-15.ipynb}{\tt QPG4-8-40-15.ipynb}
notebook on the {\tt eigenspace-embeddings} repository~\cite{v25}.

Let $\D$ be the set of all symmetric relations $R \subseteq (\F_5^*)^2$
such that $\Delta = (\F_5^*, R)$ is a graph
with the degrees of the vertices $1$ and $2$
having the same parity as the number of edges of $\Delta$
and the degrees of the vertices $3$ and $4$
having different parity from the number of edges of $\Delta$.
Note that there are precisely $8$ such graphs, so $|\D| = 8$.
We may then define the unit vectors
$u^{(h, R)} \in \RR^{\F_5 \choose 2}$ ($h \in \F_5$, $R \in \D$)
such that
$$
u^{(h, R)}_{\{i+h, j+h\}} = \begin{cases}
0 & \text{if $0 \in \{i, j\}$,} \\
-{1 \over \sqrt{6}} & \text{if $(i, j) \in R$, and} \\
{1 \over \sqrt{6}} & \text{otherwise}
\end{cases}
\qquad {\textstyle (\{i, j\} \in {\F_5 \choose 2})}.
$$
These vectors can be viewed as a spherical representation
of the corresponding association scheme
$\A = (X, \R = \{R_i \mid 0 \le i \le 4\})$,
with two vertices being in relations $R_0, R_1, R_2, R_3, R_4$
when their corresponding vectors have inner products equal to
$1, -{1 \over 2}, 0, -{1 \over 3}, {1 \over 6}$, respectively.

The graph $\G_1$ is an $8$-regular arc-transitive graph
of diameter $3$ and girth $4$.
It has appeared in a census of edge-girth-regular graphs~\cite{gj24,gj25},
as each edge lies on precisely seven $4$-cycles,
and in a census of rotary maps~\cite{c12,cp24}
as a graph polyhedrally embedding as a chiral rotary map
on the orientable surface of genus $21$.

The corresponding association scheme $\A$
can be reconstructed from $\G_1$ as follows.
The relation $R_1$ is the adjacency relation of $\G_1$.
For each vertex $x$ of $\G_1$,
there are precisely three vertices at distance $3$ from $x$,
and they are also mutually at distance $3$
-- together with $x$ they form a $R_3$-clique $Y$.
There are precisely four vertices at distance $2$ from all vertices of $Y$
-- these vertices are in relation $R_2$ with $x$
(and with other vertices of $Y$).
Finally,
the remaining vertices are adjacent to a vertex of $Y$ distinct from $x$
-- these are in relation $R_4$ with $x$.

The group of automorphisms of $\G_1$ has order $1920$
and is isomorphic to a semidirect product $Z_2^4 \rtimes S_5$
-- this also holds for the group of automorphisms of $\A$.
Its natural action on $X^2$
preserves the partition $\R$
-- i.e., each relation of $\A$ corresponds to an orbit of the action.
\end{remark}



\bibliographystyle{abbrv}


\end{document}